\theoremstyle{plain}
\newtheorem{thm}{Theorem}
\newtheorem{prop}[thm]{Proposition}
\theoremstyle{definition}
\newtheorem{rem}[thm]{Remark}
\def\final{0}  % set this to 1 to get a comment-free version
\def\iflong{\iffalse}
\newcommand{\knote}[1]{{\color{red}[{\tiny \textbf{Kristóf:} \bf #1}]\marginpar{\color{red}*}}}
\newcommand{\yanote}[1]{{\color{red}[{\tiny \textbf{Yutaro:} \bf #1}]\marginpar{\color{red}*}}}
\newcommand{\yonote}[1]{{\color{red}[{\tiny \textbf{Yu:} \bf #1}]\marginpar{\color{red}*}}}
\newcommand{\ktnote}[1]{{\color{red}[{\tiny \textbf{K. Tamás:} \bf #1}]\marginpar{\color{red}*}}}
\newcommand{\stnote}[1]{{\color{red}[{\tiny \textbf{S. Tamás:} \bf #1}]\marginpar{\color{red}*}}}
\newcommand{\knote}[1]{}
\newcommand{\yanote}[1]{}
\newcommand{\yonote}[1]{}
\newcommand{\ktnote}[1]{}
\newcommand{\stnote}[1]{}
\DeclareMathOperator\clo{cl}
\newcommand{\bR}{\mathbb{R}}
\newcommand{\cI}{\mathcal{I}}
\newcommand{\cF}{\mathcal{F}}
\newcommand{\cH}{\mathcal{H}}
\newcommand{\cZ}{\mathcal{Z}}
\title{Hypergraph characterization of split matroids}
\author{
\hfill
\and
Kristóf Bérczi\thanks{MTA-ELTE Momentum Matroid Optimization Research Group, Budapest, Hungary.} \thanks{MTA-ELTE Egerváry Research Group, Budapest, Hungary.} \thanks{Department of Operations Research, Eötvös Loránd University, Budapest, Hungary. Email: \texttt{kristof.berczi@ttk.elte.hu, tamas.kiraly@ttk.elte.hu, tamas.schwarcz@ttk.elte.hu}.}
\and
Tamás Király\footnotemark[1] \footnotemark[2] \footnotemark[3]
\and
Tamás Schwarcz\footnotemark[1] \footnotemark[3]
\and
\hfill
\and
Yutaro Yamaguchi\thanks{Department of Information and Physical Sciences, Graduate School of Information Science and Technology, Osaka University, Osaka, Japan. Email: \texttt{yutaro.yamaguchi@ist.osaka-u.ac.jp}.}
\and
Yu Yokoi\thanks{Principles of Informatics Research Division, National Institute of Informatics, Tokyo, Japan. Email: \texttt{yokoi@nii.ac.jp}.}
}
\begin{document}
\maketitle
%%%%%%%%%%%%%%%%%%%%%%%%

\begin{abstract}
We provide a combinatorial study of split matroids, a class that was motivated by the study of matroid polytopes from a tropical geometry point of view. A nice feature of split matroids is that they generalize paving matroids, while being closed under duality and taking minors. Furthermore, these matroids proved to be useful in giving exact asymptotic bounds for the dimension of the Dressian, and also implied new results on the rays of the tropical Grassmannians.  

In the present paper, we introduce the notion of elementary split matroids, a subclass of split matroids that contains all connected split matroids. We give a hypergraph characterization of elementary split matroids in terms of independent sets, and show that the proposed class is closed not only under duality and taking minors but also truncation. We further show that, in contrast to split matroids, the proposed class can be characterized by a single forbidden minor. As an application, we provide a complete list of binary split matroids.  

\medskip

\noindent \textbf{Keywords:} Binary matroids, Excluded minors, Split matroids, Paving matroids
\end{abstract}

%%%%%%%%%%%%%%%%%%%%%%%%
\section{Introduction}
%%%%%%%%%%%%%%%%%%%%%%%%
The class of split matroids was introduced by Joswig and Schr\"oter~\cite{joswig2017matroids} as an efficient tool in tropical geometry. Their definition was based on a polyhedral approach, by imposing conditions on the split hyperplanes of the matroid base polytope. In order to recall the definition, we first overview the polyhedral background.

Given a polytope $P$, its intersection with a supporting hyperplane is called a \textbf{face} of $P$. The polytope itself is also considered to be a face. A face is a \textbf{facet} if it is properly contained in exactly one face, namely $P$. A \textbf{split} of $P$ is a subdivision without new vertices which has exactly two maximal cells. The affine span of the intersection of the two cells is then called a \textbf{split hyperplane}. Two splits are \textbf{compatible} if the corresponding split hyperplanes do not meet in a relative interior point of $P$. Let $M=(S,r_M)$ be a matroid on ground set $S$ with rank function $r_M$. We denote the rank of the matroid by $r$, that is, $r_M(S)=r$. The convex hull of the characteristic vectors of the bases of $M$ is called the \textbf{matroid base polytope} of $M$ and is denoted by $P(M)$. We denote by $\Delta(r,S)$ the $|S|-1$-dimensional hypersimplex representing the matroid base polytope of the rank-$r$ uniform matroid on $S$, that is, the convex hull of all zero-one vectors over $S$ with exactly $r$ ones. For a flat $F$ of $M$, the \textbf{$F$-hyperplane} is defined as $H(F)=\{x\in\bR^S\mid x(F)=r_M(F)\}$, while the intersection $P(M)\cap H(F)$ is the \textbf{face of $P(M)$ defined by $F$}. Note that two flats might define the same face. A flat $F$ is called a \textbf{flacet} if it defines a facet and is inclusionwise minimal among flats defining $H(F)$. A flacet $F$ is a \textbf{split flacet} if the corresponding $F$-hyperplane defines a split of $\Delta(r,S)$. Roughly, the split flacets are the hyperplanes that are used to cut off parts of $\Delta(r,S)$ to obtain $P(M)$. Using this terminology, a matroid $M$ is a \textbf{split matroid} if its split flacets form a compatible system of splits of the affine hull of $P(M)$ intersected with the unit cube $[0,1]^S$.

The goal of the present paper is to give a combinatorial understanding of split matroids.

\paragraph{Previous work.}
Joswig and Schr\"oter~\cite{joswig2017matroids} gave a thorough analysis of split matroids in terms of polyhedral geometry. They observed that it suffices to concentrate on the connected case, as a matroid is a split matroid if and only if at most one connected component is a non-uniform split matroid and all other components are uniform~\cite[Proposition 15]{joswig2017matroids}. For the connected case, they also gave a characterization that does not rely on polyhedral combinatorics, stating that a connected matroid is a split matroid if and only if for each split flacet $F$ the restriction $M|F$ and the contraction $M/F$ both are uniform~\cite[Theorem 11]{joswig2017matroids}. 

Besides their applicability in tropical geometry, split matroids are also of combinatorial interest. In particular, the class of split matroids contains all paving matroids, a well-studied class with distinguished structural properties. A conjecture of Crapo and Rota~\cite{crapo1970foundations}, that was made precise by Mayhew, Newman, Welsh and Whittle~\cite{mayhew2011asymptotic}, suggests that the asymptotic fraction of matroids on $n$ elements that are paving tends to $1$ as $n$ tends to infinity. Therefore, an affirmative answer to the conjecture would imply that almost all matroids are split. A weakness of paving matroids is that their class is not closed under duality, a property that is desired in many cases. However, split matroids are closed both under duality and taking minors~\cite[Proposition 44]{joswig2017matroids}, hence they form a large class with strong combinatorial properties.   

In the light of minor-closedness, it is natural to ask what the excluded minors are for the class of split matroids. It is an easy exercise to show that uniform matroids are exactly the $U_{0,1}\oplus U_{1,1}$ minor-free matroids. The broader class of paving matroids coincides with the family of $U_{0,1}\oplus U_{2,2}$-minor-free matroids~\cite{oxley1991ternary}. For split matroids, Joswig and Schr\"oter~\cite[Question A]{joswig2017matroids} identified five forbidden minors, and Cameron and Mayhew~\cite{cameron2021excluded} later verified that the list is complete. It is worth mentioning that $U_{0,\ell}\oplus U_{k,k}$-minor-free matroids were studied for positive integers $k$ and $\ell$ in general~\cite{drummond2021generalisation}, while \cite{vega2020thesis} gave excluded-minor characterizations for the class of so-called nearly-uniform and nearly-paving matroids.

\paragraph{Our results.}
In \cite{joswig2017matroids}, split matroids were introduced via polyhedral geometry. The polyhedral point of view gives an insight into the geometry of the base polytope of split matroids, which in turn leads to a series of fundamental structural results. However, the polyhedral approach has two shortcomings when it comes to optimization. First, the definition is difficult to work with as it relies on the joint structure of split flacets. For example, it is not immediate to see what the independent sets are, or how the rank of a set can be determined. Furthermore, as it was already observed in \cite{joswig2017matroids}, the notion of split matroids is a bit subtle in the disconnected case. This is strengthened by the fact that while uniform or paving matroids can be characterized by a single excluded minor, the class of split matroids requires five of those. 

The above observations suggest that there might be an intermediate matroid class that captures all the good characteristics of split matroids (i.e.\ closed under duality and taking minors) but is more convenient to work with in terms of optimization. We show that this is indeed true and introduce a class that we call elementary split matroids. The proposed class is a proper subclass of split matroids which includes all connected split matroids. The definition follows a combinatorial approach by setting the independent sets of the matroid to be the family of sets having bounded intersections with certain hyperedges. An analogous characterization was previously known for paving matroids, see \cite{hartmanis1959lattice,welsh1976matroid}: for a non-negative integer $r$, a ground set $S$ of size at least $r$, and a (possibly empty) family $\mathcal{H}=\{H_1,\dots,H_q\}$ of proper subsets of $S$ such that $|H_i\cap H_j|\leq r-2$ for $1\leq i < j\leq q$, the set system $\mathcal{B}_{\mathcal{H}}=\{X\subseteq S\mid |X|=r,\ X\not\subseteq H_i\ \text{for } i=1,\dots,q\}$ forms the set of bases of a paving matroid, and in fact every paving matroid can be obtained in this form. Elementary split matroids satisfy similar constraints; nevertheless, the underlying hypergraph might have more complex structure. 

We show that the proposed class has various nice properties that partially follow from connected split matroids being special cases. However, elementary split matroids are closed not only under duality and taking minors but also truncation. Furthermore, the class can be characterized by the single forbidden minor $U_{0,1}\oplus U_{1,2}\oplus U_{1,1}$, therefore fitting in the list of earlier results. Based on the excluded-minor characterization, we give a new proof for the result of \cite{cameron2021excluded} on forbidden minors of split matroids and a complete description of binary split matroids.

\medskip

The rest of the paper is organized as follows. Basic notation and definitions are introduced in Section~\ref{sec:pre}. The hypergraph representation of elementary split matroids is presented in Section~\ref{sec:hyp}. Section~\ref{sec:minor} gives an excluded-minor characterization of the proposed class. Finally, Section~\ref{sec:app} gives a new proof for the list of forbidden minors of split matroids, and further provides a complete list of binary split matroids.

%%%%%%%%%%%%%%%%%%%%%%%%
\section{Preliminaries}
\label{sec:pre}
%%%%%%%%%%%%%%%%%%%%%%%%

Let $S$ be a ground set of size $n$. A \textbf{clutter} (or \textbf{Sperner family)} is a collection $\cF$ of subsets of $S$ in which none of the sets is a subset of another. For subsets $X,Y\subseteq S$, 
the \textbf{difference} of $X$ and $Y$ is denoted by $X-Y$. If $Y$ consists of a single element $y$, then $X-\{y\}$ and $X\cup\{y\}$ are abbreviated as $X-y$ and $X+y$, respectively.

A \textbf{matroid} is a pair $M=(S,\cI)$ where $\cI\subseteq 2^S$ is the family of \textbf{independent sets} satisfying the so-called \textbf{independence axioms}: 
\begin{enumerate}[start=1, label={(I\arabic*)}] \itemsep0em
    \item \label{eq:I1} $\emptyset\in\cI$,
    \item \label{eq:I2} if $X\subseteq Y$ and  $Y\in\cI$, then $X\in\cI$,
    \item \label{eq:I3} for every subset $X\subseteq S$ the maximal subsets of $X$ which are in $\cI$ have the same cardinality. 
\end{enumerate}
%For basic definitions on matroids such as \textbf{rank}, \textbf{circuits}, \textbf{cocircuits}, \textbf{loops}, \textbf{coloops}, we refer the reader to~\cite{oxley2011matroid}.
For a set $X\subseteq S$, the maximum size of an independent subset of $X$ is the \textbf{rank} of $X$ and is denoted by $r_M(X)$. The subscript $M$ is dismissed when the matroid is clear from the context. The inclusionwise maximal members of $\cI$ are called \textbf{bases}. An inclusionwise minimal non-independent set forms a \textbf{circuit}, while a \textbf{loop} is a circuit consisting of a single element. 
The \textbf{dual} of $M$ is the matroid $M^*=(S,\cI^*)$ where $\cI^*=\{X\subseteq S \mid S-X\ \text{contains a basis of $M$}\}$. 
A \textbf{cocircuit} or \textbf{coloop} of $M$ is a circuit or loop of $M^*$, respectively. 
The matroid is \textbf{connected} if for any two elements $e,f\in S$ there exists a circuit containing both. This can be shown to be equivalent to $r_M(X)+r_M(S-X)> r_M(S)$ for every $\emptyset\neq X\subsetneq S$.
A set $X\subseteq S$ is \textbf{closed} or is a \textbf{flat} if $r_M(X+e)>r_M(X)$ for every $e\in S-X$.
The \textbf{closure} of a set $X\subseteq S$, that is, the inclusionwise minimal closed set containing $X$ is denoted by $\clo_M(X)$.
Two non-loop elements $e,f\in S$ are \textbf{parallel} if $r_M(\{e,f\})=1$. A flat of rank one is called a \textbf{parallel class}. 
A flat is \textbf{proper} if it has nonzero rank and it is not the ground set of the matroid. 
A subset $Z\subseteq S$ is \textbf{cyclic} if it is the (possibly empty) union of circuits, or equivalently, the matroid restricted to $Z$ has no coloops. Bonin and de~Mier \cite{bonin2008lattice} rediscovered the following axiom scheme for the cyclic flats of a matroid, first proved by Sims \cite{sims1980thesis}.

\begin{prop} \label{prop:cyclic}
Let $\cZ$ be a collection of subsets of a ground set $S$ and $r\colon \cZ \to \mathbb{Z}_{\ge 0}$ a function. There is a matroid $M$ on $S$ for which $\cZ$ is the set of cyclic flats and $r$ is the rank function of $M$ restricted to $\cZ$ if and only if the following conditions are satisfied:
\begin{enumerate}[start=0, label={(Z\arabic*)}] \itemsep0em
    \item \label{eq:Z0}  $\cZ$ is a lattice under inclusion,
    \item \label{eq:Z1} $r(0_\cZ) = 0$ where $0_\cZ$ is the zero of this lattice,
    \item \label{eq:Z2} $0<r(Y)-r(X)<|Y-X|$ for all $X, Y \in \mathcal{Z}$ with $X\subsetneq Y$,
    \item \label{eq:Z3} $r(X)+r(Y) \ge r(X \vee Y) + r(X\wedge Y) + |(X\cap Y)-(X\wedge Y)|$ for all $X,Y \in \cZ$ with join $X \vee Y$ and meet $X\wedge Y$. 
\end{enumerate}
In this case, the independent sets of $M$ are $\cI = \{I \subseteq S\mid |I \cap Z| \le r(Z) \text{ for each } Z \in \cZ\}$.
\end{prop}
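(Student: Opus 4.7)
My plan is to prove the two directions of the equivalence separately, with the substantive work being in the sufficiency direction.

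For necessity, given a matroid $M$ on $S$ with cyclic flats $\cZ$ and rank function $r_M$, setting $r := r_M|_\cZ$ reduces everything to verification. Axiom (Z0) holds because $\cZ$ forms a lattice: the meet $X \wedge Y$ is the largest cyclic subset of the flat $X \cap Y$ (obtained by iteratively deleting coloops) and the join $X \vee Y$ is the smallest cyclic flat containing $X \cup Y$, which satisfies $r_M(X \vee Y) = r_M(X \cup Y)$ because the elements added in forming the cyclic closure are dependent on those already present. Axiom (Z1) is immediate with $0_\cZ$ equal to the set of loops. For (Z2) with $X \subsetneq Y$ in $\cZ$, strict monotonicity $r(X) < r(Y)$ follows from $Y$ being a flat strictly containing $X$, while $r(Y) - r(X) < |Y - X|$ holds because equality would force every element of $Y - X$ to be a coloop of $Y$, contradicting its cyclicity. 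For (Z3), combine rank submodularity $r_M(X) + r_M(Y) \ge r_M(X \cup Y) + r_M(X \cap Y)$ with the identities $r_M(X \cup Y) = r(X \vee Y)$ and $r_M(X \cap Y) = r(X \wedge Y) + |(X \cap Y) - (X \wedge Y)|$, the latter because each element of $(X \cap Y) - (X \wedge Y)$ is a coloop of the matroid restricted to $X \cap Y$.

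For sufficiency, assume $(\cZ, r)$ satisfies (Z0)--(Z3). Rather than checking the independence axioms for $\cI$ directly, I would construct the matroid through its rank function, setting
\[
\rho(X) := \min_{Z \in \cZ}\bigl(r(Z) + |X - Z|\bigr) \qquad (X \subseteq S),
\]
a minimum that is well-defined since $\cZ \neq \emptyset$ by (Z1). The plan is then: verify that $\rho$ is a legitimate matroid rank function (nonnegative integer-valued, subcardinal, monotone, and submodular); deduce that $\cI$ equals the independent-set family $\{I \subseteq S : \rho(I) = |I|\}$ of the resulting matroid; and identify $\cZ$ with its cyclic flats, so that $r = \rho|_\cZ$.

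The main obstacle is establishing submodularity of $\rho$. Given $X_1, X_2 \subseteq S$, pick $Z_i \in \cZ$ attaining the minimum in $\rho(X_i)$ and apply (Z3) to $Z_1, Z_2$. Combining this with the containments $Z_1 \vee Z_2 \supseteq Z_1 \cup Z_2$ and $Z_1 \wedge Z_2 \subseteq Z_1 \cap Z_2$ and with the element-by-element-verified counting inequality
\[
|X_1 - Z_1| + |X_2 - Z_2| \ge |(X_1 \cup X_2) - (Z_1 \cup Z_2)| + |(X_1 \cap X_2) - (Z_1 \cap Z_2)|
\]
should yield $\rho(X_1) + \rho(X_2) \ge \rho(X_1 \cup X_2) + \rho(X_1 \cap X_2)$; the correction term $|(Z_1 \cap Z_2) - (Z_1 \wedge Z_2)|$ appearing in (Z3) is precisely what absorbs the discrepancy when $Z_1 \cap Z_2$ is replaced by $Z_1 \wedge Z_2$ in the final estimate. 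For the final identification, taking $Z = 0_\cZ$ gives $\rho(I) \le |I|$ for every $I$, and for $I \in \cI$ the inequality $r(Z) + |I - Z| \ge |I \cap Z| + |I - Z| = |I|$ yields $\rho(I) \ge |I|$, so $\cI = \{I : \rho(I) = |I|\}$. A short structural check using (Z2) then shows that each $Z \in \cZ$ is both closed and coloop-free in the constructed matroid, while every cyclic flat of the matroid is forced to lie in $\cZ$ since it attains the minimum in its own evaluation of $\rho$.
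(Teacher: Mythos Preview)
The paper does not prove this proposition: it is stated as background, attributed to Sims and to Bonin--de~Mier, and used without proof. Your sketch is essentially the standard argument from Bonin--de~Mier, defining the rank function by $\rho(X)=\min_{Z\in\cZ}\bigl(r(Z)+|X-Z|\bigr)$ and verifying the rank axioms, with (Z3) supplying exactly the correction term needed for submodularity; so there is no discrepancy to report, only the observation that the paper itself offers nothing to compare against.
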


For a non-negative integer $r\leq n$, the \textbf{uniform matroid} $U_{r,n}$ is defined on an $n$-element set by setting every subset of size at most $r$ to be independent, that is, $\cI=\{X\subseteq S\mid |X|\leq r\}$. When $r=n$, the matroid is called a \textbf{free matroid}, while the choice $r=0$ results in a \textbf{rank-0 matroid}. For technical reasons, we allow the ground set of the matroid to be the empty set, i.e.\ $n=0$, in which case the matroid is simply the \textbf{empty matroid} $M=(\emptyset, \{\emptyset\})$. A matroid of rank $r$ is called \textbf{paving} if every set of size at most $r-1$ is independent, or in other words, every circuit of the matroid has size at least $r$. 
%For a graph $G=(V,E)$, the \textbf{graphic matroid} $M(G)$ is defined on the edge set by considering a subset $F\subseteq E$ to be independent if it is a forest.

The \textbf{direct sum} $M_1\oplus M_2$ of matroids $M_1=(S_1,\cI_1)$ and $M_2=(S_2,\cI_2)$ on disjoint ground sets is the matroid $M=(S_1\cup S_2,\mathcal{I})$ whose independent sets are the disjoint unions of an independent set of $M_1$ and an independent set of $M_2$, that is, $\cI=\{I_1 \cup I_2 \mid I \in \cI_1 \text{ and } I_2 \in \cI_2\}$.
Given a non-negative integer $k$, the \textbf{$k$-truncation} of $M=(S,\cI)$ is the matroid $(M)_{k}=(S,\cI_k)$ with $\cI_k=\{X\in\cI\mid |X|\leq k\}$. 
%Given a set $Z\subseteq S$, the matroids obtained by the \textbf{restriction} to $Z$, \textbf{deletion} of $Z$, or \textbf{contraction} of $Z$ are denoted by $M|Z$, $M\backslash Z$ and $M/Z$, respectively.
Given a subset $S'\subseteq S$, the \textbf{restriction} of $M$ to $S'$ is again a matroid $M|S'=(S',\cI')$ with independence family $\cI'=\{I\in\cI\mid I\subseteq S'\}$. We also say that $M|S'$ is obtained by the \textbf{deletion} of $S-S'$, denoted by $M\backslash (S-S')$. The \textbf{contraction} of a subset $S''\subseteq S$ results in a matroid $M/S''=(S-S'',\cI'')$ where $\cI''=\{I\in\cI\mid I\subseteq S-S'',\ |I|=r_M(S''\cup I)-r_M(S'')\}$. 
A matroid $N$ that can be obtained from $M$ by a sequence of deletions and contractions is called a \textbf{minor} of $M$. For uniform matroids, it is not difficult to see that $U_{k',\ell'}$ is a minor of $U_{k,\ell}$ if and only if $k'\leq k$ and $\ell-\ell'\geq k-k'$ hold.
%It can be verified that the resulting matroid $N$ is independent from the actual order of deletions and contractions, and that $(M\backslash Z)^*=(M^*/Z)$. 
The following well-known result is \cite[Theorem~4.3.1]{oxley2011matroid}.

\begin{prop} \label{prop:connected}
Let $e$ be an element of a connected matroid $M$. Then $M/e$ or $M\backslash e$ is connected.
\end{prop}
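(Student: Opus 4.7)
The plan is to argue by contradiction using the circuit characterization of connectedness. If $|S|\le 1$, the claim is trivial, since both $M\setminus e$ and $M/e$ are empty matroids. Otherwise $|S|\ge 2$ and, since $M$ is connected, $e$ is neither a loop nor a coloop, so $r_M(S-e)=r_M(S)$ and $r_M(\{e\})=1$. Suppose for contradiction that both $M\setminus e$ and $M/e$ are disconnected, and use the rank characterization to fix $1$-separations: partitions $S-e=A\sqcup B$ with $A,B\ne\emptyset$ and $r_M(A)+r_M(B)=r_M(S)$, and $S-e=C\sqcup D$ with $C,D\ne\emptyset$ and $r_M(C+e)+r_M(D+e)=r_M(S)+1$. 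These equalities give the direct-sum decompositions $M\setminus e=(M\setminus e)|A\oplus (M\setminus e)|B$ and $M/e=(M/e)|C\oplus(M/e)|D$, so every circuit of $M\setminus e$ lies entirely in $A$ or in $B$, and every circuit of $M/e$ lies entirely in $C$ or in $D$.

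The next step is a short case analysis producing $c\in C$ and $d\in D$ that sit in opposite parts of $\{A,B\}$: if no such pair existed, then a combination of empty intersections among $A\cap C$, $A\cap D$, $B\cap C$, $B\cap D$ would force one of $A$, $B$, $C$, or $D$ itself to be empty, contradicting our setup. Assume, say, $c\in A\cap C$ and $d\in B\cap D$. By connectedness of $M$ there is a circuit $K$ of $M$ containing both $c$ and $d$. If $e\notin K$, then $K$ is a circuit of $M\setminus e$ meeting both $A$ and $B$, contradicting the direct-sum decomposition of $M\setminus e$. Otherwise $e\in K$, and from $r_M(K)=|K|-1$ together with $K-f$ being independent in $M$ for every $f\in K$ one verifies that $K-e$ is a circuit of $M/e$; but $K-e$ contains $c\in C$ and $d\in D$, again contradicting a direct-sum decomposition.

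The main delicate point I foresee is the case analysis that produces the pair $(c,d)$: one has to enumerate the ways in which the two ``diagonal'' products $(A\cap C)\times (B\cap D)$ and $(A\cap D)\times (B\cap C)$ could both be empty and verify that each such configuration forces one of $A$, $B$, $C$, $D$ to be empty. The remaining steps amount to elementary manipulations using the identities $r_{M\setminus e}(X)=r_M(X)$ and $r_{M/e}(X)=r_M(X+e)-1$, together with the standard fact that in a direct sum of matroids every circuit lives in a single summand.
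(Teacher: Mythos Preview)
The paper does not give its own proof of this proposition; it simply cites it as \cite[Theorem~4.3.1]{oxley2011matroid}. So there is no in-paper argument to compare against.

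Your argument is correct. The two places that deserve a word of care both check out: the combinatorial claim that one of the diagonals $(A\cap C)\times(B\cap D)$ or $(A\cap D)\times(B\cap C)$ is nonempty follows by the four-case analysis you indicate (each of the four ``both empty'' configurations forces one of $A,B,C,D$ to be empty); and the claim that if $e\in K$ then $K-e$ is a circuit of $M/e$ is verified by the rank computation $r_{M/e}(K-e)=r_M(K)-1=|K|-2$ together with $r_{M/e}(K-e-f)=r_M(K-f)-1=|K|-2$ for each $f\in K-e$, using that $e$ is not a loop. Your approach is essentially the standard one: Oxley's proof also proceeds by assuming both minors are separated and combining the two separations to produce a separation of $M$, though the bookkeeping there is phrased in terms of the separations directly rather than via a circuit through the two ``crossing'' elements. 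Your circuit-based finish is a clean variant.
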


A class $\mathcal{M}$ of matroids is \textbf{minor-closed} if for any member $M$ of $\mathcal{M}$, each minor of $M$ is also contained in $\mathcal{M}$. 
For a minor-closed class $\mathcal{M}$, a \textbf{nearly-$\mathcal{M}$ matroid} is a matroid $M$ such that $M/e \in \mathcal{M}$ or $M\backslash e\in \mathcal{M}$ for each element $e$. We will use the following observation of \cite{tejeda2020thesis, vega2020thesis}. 

\begin{prop} \label{prop:nearly}
The class of nearly-$\mathcal{M}$ matroids is minor-closed for each minor-closed class $\mathcal{M}$ of matroids.
\end{prop}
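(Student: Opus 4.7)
The plan is to use the standard observation that every minor is obtained by a sequence of single-element deletions and contractions, and to show that the nearly-$\mathcal{M}$ property is preserved under each such single step. So it suffices to prove: if $M$ is a nearly-$\mathcal{M}$ matroid and $e$ is an element of $M$, then both $M\backslash e$ and $M/e$ are again nearly-$\mathcal{M}$.

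Fix $M$ nearly-$\mathcal{M}$ and $e$, and set $N$ to be either $M\backslash e$ or $M/e$. Pick any element $f$ of $N$; we must exhibit that $N/f\in\mathcal{M}$ or $N\backslash f\in\mathcal{M}$. The idea is to apply the nearly-$\mathcal{M}$ hypothesis to $M$ at the element $f$: one of $M/f$, $M\backslash f$ lies in $\mathcal{M}$. In each of the four resulting cases (two choices for the operation producing $N$, and two for which of $M/f$, $M\backslash f$ is in $\mathcal{M}$), I would use the commutativity of deletion and contraction on disjoint elements to rewrite the composed operation as first doing the $\mathcal{M}$-producing step at $f$ and then deleting or contracting $e$. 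For example, if $N=M\backslash e$ and $M/f\in\mathcal{M}$, then $N/f = (M\backslash e)/f = (M/f)\backslash e$ is a minor of a member of $\mathcal{M}$, hence lies in $\mathcal{M}$ by minor-closedness. The other three cases are strictly analogous and each yield either $N/f\in\mathcal{M}$ or $N\backslash f\in\mathcal{M}$, which is exactly what is needed.

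There is no real obstacle: the argument is a short case analysis that combines two ingredients, the commutativity $(M\star_1 e)\star_2 f = (M\star_2 f)\star_1 e$ for distinct elements and operations $\star_1,\star_2\in\{\backslash,/\}$, and the assumption that $\mathcal{M}$ itself is minor-closed. The only mild point worth flagging is that the element $f$ chosen in $N$ must be distinct from $e$ (since $f$ survives into $N$), which is what legitimizes the use of commutativity. Once this verification is made for both $N=M\backslash e$ and $N=M/e$, iterating the step along any sequence of deletions and contractions yields that every minor of a nearly-$\mathcal{M}$ matroid is again nearly-$\mathcal{M}$, completing the proof.
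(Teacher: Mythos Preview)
Your argument is correct: the four-case analysis using commutativity of single-element deletion and contraction on distinct elements, together with minor-closedness of $\mathcal{M}$, does the job, and you correctly note that $f\neq e$ since $f$ survives into $N$. The paper does not actually prove this proposition; it is only stated with a citation to \cite{tejeda2020thesis, vega2020thesis}, so there is no in-paper proof to compare against, but your reasoning is the standard one.
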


The \textbf{rank-2 wheel} $M(\mathcal{W}_2)$ is the matroid obtained from $U_{2,3}$ by adding a parallel copy of one of the elements of the ground set.
%graphic matroid of the graph obtained from a triangle by adding a parallel copy of one of the edges.
The following is a consequence of a result of Gershkoff and Oxley \cite{gershkoff2018notion}.
\begin{prop} \label{prop:w2}
Every connected non-uniform matroid contains $M(\mathcal{W}_2)$ as a minor.
\end{prop}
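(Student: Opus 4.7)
The plan is strong induction on $|S|$. The base case $|S| \le 4$ is handled by direct case analysis: for $|S| \le 3$, connectedness rules out loops and coloops, leaving only uniform matroids, and for $|S| = 4$ an analysis by rank identifies $M(\mathcal{W}_2)$ as the unique connected non-uniform matroid (the rank-$2$ case, consisting of a parallel pair plus two generic elements).

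For the inductive step with $|S| \ge 5$, pick any $e \in S$. By Proposition \ref{prop:connected}, one of $M\backslash e$ or $M/e$ is connected; call it $N$. If $N$ is non-uniform, the induction hypothesis applied to $N$ yields $M(\mathcal{W}_2)$ as a minor of $N$, hence of $M$. Otherwise every connected single-element minor of $M$ is uniform; using that $M(\mathcal{W}_2)$ is self-dual and $M^*$ is also connected and non-uniform, by replacing $M$ with its dual if needed we may assume $M\backslash e_0 = U_{r, n-1}$ for some $e_0 \in S$ with $r \ge 2$. Non-uniformity of $M$ then forces a circuit $C \ni e_0$ with $|C| = k \le r$, while uniformity of $M\backslash e_0$ forbids parallel pairs in $S - e_0$, so the parallel class of $e_0$ in $M$ has size at most $2$.

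The construction of $M(\mathcal{W}_2)$ is direct in this remaining case. When $k = 2$, write $P = \{e_0, f\}$ for the parallel class of $e_0$. Connectedness yields $r(S - P) = r$ and $|S - P| \ge r$, so we can choose a basis $B$ of $M$ with $B \cap P = \emptyset$; the fundamental circuit $C(e_0, B)$ then has size at least $3$. Pick distinct $g, h \in C(e_0, B) - e_0$ and set $X := B - \{g, h\}$. A short verification via basis exchange, using the identity $C(f, B) - f = C(e_0, B) - e_0$ obtained by circuit elimination on $C(e_0, B)$ and the $2$-circuit $\{e_0, f\}$, shows that $M/X$ has rank $2$, that $\{e_0, f\}$ is a parallel pair in $M/X$, and that $\{g, h\}, \{e_0, g\}, \{e_0, h\}, \{f, g\}, \{f, h\}$ are all bases of $M/X$; thus $M/X \,|\, \{e_0, f, g, h\} \cong M(\mathcal{W}_2)$. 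When $k \ge 3$, we first contract $C - \{e_0, e_1\}$ for some $e_1 \in C$, after which $\{e_0, e_1\}$ becomes a $2$-circuit, reducing to the previous case inside the resulting minor.

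The main obstacle is the simultaneous verification of the five basis conditions and the one $2$-circuit condition in $M/X$. The decisive choice is $g, h \in C(e_0, B) - e_0$: it prevents the fundamental circuit $C(e_0, B)$ from being contained in $X \cup \{e_0\}$, which is precisely what keeps $e_0$ a non-loop in $M/X$; without this precaution, $e_0$ would collapse to a loop and the parallel pair structure would be destroyed. The symmetric identity for $f$ then ensures that $f$ is also a non-loop, and the five basis statements follow from the uniqueness of the fundamental circuit in $B \cup \{e_0\}$ together with the fact that removing either of $g$ or $h$ from $B$ destroys that circuit.
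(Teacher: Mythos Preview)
The paper does not prove this proposition; it is quoted as a consequence of a result of Gershkoff and Oxley and left without argument. So there is nothing in the paper to compare against, and your direct inductive proof is genuinely additional.

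Your argument is essentially correct, and the $k=2$ construction is clean and fully verified. There is one point in the $k\ge 3$ reduction that needs to be made explicit. When you write ``reducing to the previous case inside the resulting minor'' $M' = M/(C-\{e_0,e_1\})$, you are implicitly re-running the $k=2$ construction there. But in the $k=2$ case you invoked \emph{connectedness of $M$} to obtain a basis $B$ disjoint from the parallel pair $P$ (via $r_M(S-P)=r$), and $M'$ need not be connected, so that step does not transfer verbatim. The fix is short: since you chose $e_0$ so that $M\backslash e_0$ is \emph{connected} and equal to $U_{r,n-1}$ with $n-1\ge 4$, you automatically have $r\le n-2$. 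Writing $r':=r-k+2$ and $n':=n-k+2$, this gives $n'-r'=n-r\ge 2$, so in $M'\backslash e_0 = U_{r',\,n'-1}$ the set $S'-\{e_0,e_1\}$ of size $n'-2\ge r'$ has full rank $r'$, and a basis $B'$ of $M'$ disjoint from $\{e_0,e_1\}$ exists. The remaining ingredients also transfer: the parallel class of $e_0$ in $M'$ is exactly $\{e_0,e_1\}$ (a third parallel element would force a parallel pair inside $M'\backslash e_0=U_{r',n'-1}$ with $r'\ge 2$), hence $|C_{M'}(e_0,B')|\ge 3$, and your circuit-elimination identity $C_{M'}(e_1,B')-e_1=C_{M'}(e_0,B')-e_0$ goes through unchanged. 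With this one clarification the proof is complete.
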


For connected matroids, the following proposition summarizes the relations between the different notions of flacets and characterizes compatibility, see \cite{cameron2021excluded,joswig2017matroids,fujishige1984characterization,feichtner2005}.

\begin{prop} Let $M$ be a connected matroid on ground set $S$ with rank function $r_M$.
\begin{enumerate}[label=(\alph*)]\itemsep0em \label{prop:flacet}
    \item A subset $Z\subseteq S$ is a flacet of $M$ if and only if it is a proper flat such that both $M|Z$ and $M/Z$ are connected. \label{it:1}
    \item A flacet $Z$ is a split flacet if and only if $|Z|\ge 2$, or equivalently, if $Z$ is cyclic. \label{it:2} 
    \item For distinct split flacets $F$ and $G$, the splits obtained from the $F$- and $G$-hyperplanes are compatible if and only if $|F\cap G|+r_M(S) \le r_M(F)+r_M(G)$. \label{it:3}
\end{enumerate}
\end{prop}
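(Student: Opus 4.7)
The plan is to treat the three parts separately. For part (a), I would invoke the Feichtner--Sturmfels/Fujishige characterization of facets of matroid base polytopes: for a matroid with no loops or coloops (which a connected matroid with $|S|\ge 2$ is), the facet-defining hyperplanes of $P(M)$ are the nonnegativity and upper-bound inequalities together with the hyperplanes $x(F)=r_M(F)$ for proper flats $F$ such that both $M|F$ and $M/F$ are connected. Among all flats defining the same hyperplane of the latter type, the inclusionwise minimal one is unique and is by definition a flacet; matching the two descriptions yields (a).

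For part (b), I would analyze directly when $H(F)$ defines a split of $\Delta(r,S)$. The ``no new vertices'' requirement is automatic: for adjacent vertices $\chi_B,\chi_{B'}$ of $\Delta(r,S)$, the quantities $|B\cap F|$ and $|B'\cap F|$ differ by at most one, so the integer-valued hyperplane $x(F)=r_M(F)$ cannot bisect any edge. It only remains that some vertices lie strictly on each side, i.e., $\max(0,r-|S\setminus F|)<r_M(F)<\min(r,|F|)$. Since $F$ is a proper flat of a connected matroid, $0<r_M(F)<r$; and since $M/F$ is connected on $S\setminus F$ of size at least two (a proper flat in a connected matroid cannot have complement of size one, else that element would be a coloop), we have $r-r_M(F)<|S\setminus F|$. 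So the split condition collapses to $r_M(F)<|F|$, which for a flat with $M|F$ connected is equivalent to $|F|\ge 2$ (the singleton case forces $r_M(F)=|F|=1$) and, simultaneously, to $F$ being cyclic (a connected restriction on $\ge 2$ elements has no coloops, so every element lies in a circuit).

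For part (c), I would work in $\mathrm{aff}(P(M))\cap[0,1]^S=\Delta(r,S)$ and translate incompatibility into the solvability of the system $x(F)=r_M(F)$, $x(G)=r_M(G)$, $x(S)=r_M(S)$ with $0<x_e<1$ for every $e\in S$. Partitioning $S$ into the four blocks $F\cap G$, $F\setminus G$, $G\setminus F$, $S\setminus(F\cup G)$ and letting $a,b,c,d$ be the respective coordinate sums of $x$, the three equations give $b=r_M(F)-a$, $c=r_M(G)-a$, $d=r_M(S)-r_M(F)-r_M(G)+a$; strict feasibility in each block amounts to each block sum lying strictly between $0$ and the block size. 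The constraints from the $F\setminus G$ and $G\setminus F$ blocks are rendered slack by the inequalities $r_M(F)<|F|$, $r_M(G)<|G|$ (from $F,G$ cyclic) and the connectedness of $M/F$, $M/G$; the remaining effective bounds are $0<a<|F\cap G|$ against $r_M(F)+r_M(G)-r_M(S)<a$, which are jointly satisfiable exactly when $|F\cap G|>r_M(F)+r_M(G)-r_M(S)$. Hence compatibility is equivalent to the stated inequality.

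The main obstacle lies in part (c): one must carefully verify that the block-sum constraints from $F\setminus G$ and $G\setminus F$ are indeed slack under the split-flacet assumptions, and handle the degenerate configurations where a block is empty (e.g.\ $F\cap G=\emptyset$, $F\subseteq G$, or $F\cup G=S$) as separate cases; parts (a) and (b) are essentially translations of known polyhedral facts.
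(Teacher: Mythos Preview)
The paper does not give its own proof of this proposition: it is stated as a summary of known facts with citations to Cameron--Mayhew, Joswig--Schr\"oter, Fujishige, and Feichtner--Sturmfels, and no proof follows. So there is nothing in the paper to compare your argument against directly.

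That said, your plan is the standard one and matches what one finds in the cited sources. Part~(a) is precisely the Fujishige/Feichtner--Sturmfels facet description, as you say. Part~(b) is correct; in particular your claim that a proper flat $F$ of a connected matroid has $|S\setminus F|\ge 2$ is right (if $S\setminus F=\{e\}$ then $r_M(S-e)=r_M(F)<r$, so $e$ is a coloop), and this makes the lower split bound $\max(0,r-|S\setminus F|)<r_M(F)$ automatic, leaving only $r_M(F)<|F|$, i.e.\ $|F|\ge 2$, i.e.\ $F$ cyclic (since $M|F$ is connected).

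For part~(c), your block-sum reduction is exactly the approach in Herrmann--Joswig's treatment of hypersimplex splits. The direction ``incompatible $\Rightarrow |F\cap G|>r_M(F)+r_M(G)-r$'' is immediate from the $F\cap G$ and $S\setminus(F\cup G)$ constraints alone, as you note. The converse does require checking that every remaining lower-vs-upper bound comparison on $a$ is implied by the split-flacet hypotheses; for instance $r_M(F)-|F\setminus G|<|F\cap G|$ is literally $r_M(F)<|F|$ (cyclicity), while $r_M(F)+r_M(G)-r<r_M(F)$ is $r_M(G)<r$ ($G$ proper), and the analogous dual inequalities use $r-r_M(F)<|S\setminus F|$ from connectedness of $M/F$. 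You correctly flag that the empty-block cases ($F\cap G=\emptyset$, $F\cup G=S$, $F\subseteq G$) need separate handling; these all go through, but do constitute genuine case-checking that your sketch omits. In short: your outline is correct and complete enough as a plan, with the acknowledged routine verifications in~(c) left to fill in.
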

As we will show, the inequality of \ref{it:3} motivates a matroid class slightly different from that of split matroids. %Finally, two matroids $M$ and $N$ are \textbf{isomorphic}, denoted by $M\cong N$, if there is a bijection from the ground set of one to the other such that a set is independent in the first matroid if and only if its image is independent in the second matroid.

%%%%%%%%%%%%%%%%%%%%%%%%
\section{Hypergraph representation}
\label{sec:hyp}
%%%%%%%%%%%%%%%%%%%%%%%%

In this section, we introduce the notion of elementary split matroids. Similarly to paving matroids, the definition is via hypergraphs, which will immediately imply that the proposed class is closed under duality, taking minors, and truncation. 

\begin{thm}\label{thm:hyp}
Let $S$ be a ground set of size at least $r$, $\cH=\{H_1,\dots, H_q\}$ be a (possibly empty) collection of subsets of $S$, and $r, r_1, \dots, r_q$ be non-negative integers satisfying
\begin{equation}
|H_i \cap H_j| \le r_i + r_j -r\ \text{for $1 \le i < j \le q$}. \tag*{(H1)}\label{eq:h1}
\end{equation}
Then $\cI=\{X\subseteq S\mid |X|\leq r,\ |X\cap H_i|\leq r_i\ \text{for $1\leq i \leq q$}\}$ forms the independent sets of a matroid with rank function $r_M(Z)=\min\big\{r,|Z|,\displaystyle\min_{1\leq i\leq q}\{|Z-H_i|+r_i\}\big\}$. If furthermore
\begin{equation}
|S-H_i| + r_i \ge r\ \text{for $i=1,\dots, q$} \tag*{(H2)}\label{eq:h2}
\end{equation}
holds, then the rank of the matroid is $r$.
\end{thm}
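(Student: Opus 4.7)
The plan is to verify the matroid independence axioms \ref{eq:I1}--\ref{eq:I3} directly and then read off the rank function from a maximality argument. Conditions \ref{eq:I1} and \ref{eq:I2} are immediate from the definition of $\cI$, so everything comes down to \ref{eq:I3}, which I would phrase equivalently as the augmentation property: given $X, Y \in \cI$ with $|X| < |Y|$, find some $e \in Y - X$ with $X + e \in \cI$. Since $|X| < |Y| \le r$, cardinality alone never blocks augmentation, so if no such $e$ exists, then for every $y \in Y - X$ there must be a ``blocking'' index $i(y)$ with $|X \cap H_{i(y)}| = r_{i(y)}$ and $y \in H_{i(y)}$.

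The main obstacle, and the step where hypothesis \ref{eq:h1} plays its essential role, is showing that only one such blocking index can arise. I would handle this by the pairwise estimate
\[
|X \cap (H_i \cup H_j)| = r_i + r_j - |X \cap H_i \cap H_j| \ge r_i + r_j - |H_i \cap H_j| \ge r,
\]
valid by \ref{eq:h1} for any two distinct blocking indices, which contradicts $|X| < r$. Thus a single index $i^*$ blocks all of $Y - X$, forcing $Y - X \subseteq H_{i^*}$ while $|X \cap H_{i^*}| = r_{i^*}$. A short counting step using $|Y - X| - |(X-Y) \cap H_{i^*}| \ge |Y-X| - |X-Y| = |Y|-|X| > 0$ then yields $|Y \cap H_{i^*}| > r_{i^*}$, contradicting $Y \in \cI$.

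For the rank formula $\rho(Z) = \min\{r, |Z|, \min_i(|Z - H_i| + r_i)\}$, the bound $r_M(Z) \le \rho(Z)$ is immediate from the definition of $\cI$. For the opposite bound I would take any maximal $I \in \cI$ with $I \subseteq Z$ and rerun the same tight-index analysis on $I$: the set $T$ of indices with $|I \cap H_i| = r_i$ again has $|T| \le 1$ by the identical use of \ref{eq:h1}, and maximality of $I$ combined with $|I| < r$ forces $Z - I \subseteq \bigcup_{i \in T} H_i$. Whether $T = \emptyset$ (giving $I = Z$) or $T = \{i^*\}$ (giving $Z - H_{i^*} \subseteq I$), one reads off $|I| \ge \rho(Z)$ directly, contradicting the assumption $|I| < \rho(Z)$. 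The closing claim under \ref{eq:h2} is the one-line observation that with $|S| \ge r$, every term of the minimum defining $\rho(S)$ is at least $r$.
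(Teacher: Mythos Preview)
Your proof is correct and uses the same key computation as the paper: if two distinct indices $i,j$ are both tight for an independent set $I$ (meaning $|I\cap H_i|=r_i$ and $|I\cap H_j|=r_j$), then inclusion--exclusion together with \ref{eq:h1} forces $|I|\ge r$, so under $|I|<r$ there is at most one tight index. The paper organizes the argument slightly differently, proving \ref{eq:I3} in the ``all maximal independent subsets of $Z$ have the same cardinality'' form and reading off the rank formula simultaneously; your rank-formula paragraph is essentially that same argument, which makes the separate augmentation step with $X,Y$ redundant (your second argument alone already yields \ref{eq:I3}). This is a minor point of economy, not a gap.
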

\begin{proof}
The first two independence axioms clearly hold. A nice trick of the proof is that the third independence axiom \ref{eq:I3} and the rank formula is proved simultaneously. For any subset $Z\subseteq S$, let $I\subseteq Z$ be a maximal member of $\cI$ in the sense that $I$ cannot be extended by an element of $Z$ to a member of $\cI$. If $|I|=\min\{r,|Z|\}$ then we are done, hence assume that strict inequality holds. Since $I$ is maximal in $Z$, there is a hyperedge $H_z\in\cH$ for every $z\in Z-I$ such that $|I\cap H_z|=r_z$ and $z\in H_z$. Furthermore, if $z',z''\in Z-I$ are distinct elements, then the corresponding hyperedges $H_{z'}$ and $H_{z''}$ are identical as otherwise 
\begin{align*}
|H_{z'}\cap H_{z''}|
{}&{}\geq
|I\cap H_{z'}\cap H_{z''}|\\
{}&{}=
|I\cap H_{z'}|+|I\cap H_{z''}|-|I\cap (H_{z'}\cup H_{z''})|\\
{}&{}\geq 
r_{z'}+r_{z''}-|I|\\
{}&{} > 
r_{z'}+r_{z''}-r,
\end{align*}
contradicting \ref{eq:h1}. Therefore there exists a hyperedge, say $H_i$, such that $Z-I\subseteq H_i$ and $|I\cap H_i|=r_i$. Thus we get $|I|=|I\cap H_i|+|I-H_i|=r_i+|Z-H_i|$, implying that the cardinality of $I$ depends only on $Z$. Therefore the third independence axiom holds, and the rank formula is also verified.  
If \ref{eq:h2} holds, then the rank formula implies $r_M(S)=\min\big\{r,|S|,\min_{1\leq i\leq q}\{|S-H_i|+r_i\}\big\}=r$, concluding the proof of the theorem.
\end{proof}

We call the matroids that can be obtained in the form provided by Theorem~\ref{thm:hyp} \textbf{elementary split matroids}. When \ref{eq:h2} fails for some $1\leq i\leq q$, the rank of the matroid is less than $r$ by the rank formula. In such a case, replacing $r$ with $r'=\min_{1\leq i \leq q}\{|S-H_i|+r_i\}$ does not violate \ref{eq:h1} while $\cI$ remains the same. Thus a rank-$r$ elementary split matroid can be represented by a hypergraph $\cH=\{H_1,\dots,H_q\}$ and values $r,r_1,\dots,r_q$ satisfying both \ref{eq:h1} and \ref{eq:h2}. It is not difficult to see that the underlying hypergraph can be chosen in such a way that 
\begin{align}
r_i \le r-1\  \text{for $i=1,\dots, q$,} \tag*{(H3)}\label{eq:h3}\\
|H_i| \ge r_i+1\  \text{for $i=1,\dots, q$.} \tag*{(H4)}\label{eq:h4}
\end{align}
Indeed, if a pair $(H_i,r_i)$ violates \ref{eq:h3} or \ref{eq:h4}, then the corresponding constraint $|X\cap H_i|\leq r_i$ is redundant. Therefore, we call the representation \textbf{non-redundant} if all of \ref{eq:h1}--\ref{eq:h4} hold.

Elementary split matroids generalize paving matroids. Indeed, paving matroids correspond to the special case when $r_i=r-1$ for $i=1,\dots,q$. If, in addition, $|H_i|=r$ holds for $i=1,\dots,q$, then we get back the class of sparse paving matroids.

\begin{rem}
The definition of elementary split matroids is closely related to the construction of matroids by cyclic flats, described in Proposition~\ref{prop:cyclic}. Consider a non-redundant hypergraph representation $\cH=\{H_1,\dots,H_q\}$, $r, r_1, \dots, r_q$ of a rank-$r$ elementary split matroid. In order to exclude extreme cases, assume that $q\geq 1$, the $r_i$ values are strictly positive, and \ref{eq:h2} holds with strict inequality for $1\leq i\leq q$. We claim that the family $\mathcal Z = \{\emptyset, H_1, \dots, H_q, S\}$ satisfies the conditions of Proposition~\ref{prop:cyclic} with $r_M(\emptyset) = 0$, $r_M(H_i) = r_i$ and $r_M(S)=r$. Indeed, for different indices $i$ and $j$ we have $|H_i \cap H_j| \le r_i + r_j - r \le (|H_i|-1)-1 = |H_i|-2$ by \ref{eq:h1}, \ref{eq:h3} and \ref{eq:h4}, hence $H_i \not \subseteq H_j$. Thus condition \ref{eq:Z0} is satisfied and $H_i \wedge H_j = \emptyset$, $H_i \vee H_j = S$ for each $i\ne j$. Condition \ref{eq:Z1} holds by $r_M(\emptyset)=0$. Condition \ref{eq:Z2} for $X=\emptyset$ and $Y=H_i$ translates to $0 < r_i < |H_i|$, for $X=\emptyset$ and $Y=S$ it translates to $0<r<|S|$, and for $X=H_i$ and $Y=S$ it translates to $0<r-r_i < |S-H_i|$, all of which are satisfied by our assumptions. Conditions \ref{eq:Z0}--\ref{eq:Z2} imply that \ref{eq:Z3} is satisfied if either $X$ or $Y$ is $0_\mathcal{Z}$ or $1_\mathcal{Z}$, or if $X=Y$. If $X=H_i$ and $Y=H_j$ for $i\ne j$, then \ref{eq:Z3} is equivalent to $r_i + r_j \ge r + |H_i \cap H_j|$, that is, to \ref{eq:h1}. Therefore, Proposition~\ref{prop:cyclic} provides another proof for $M$ being a rank-$r$ matroid whose system of cyclic flats is $\mathcal{Z}$. However, the addition of the missing extreme cases ensures that our class is minor-closed.
\end{rem}

A nice feature of the class of split matroids is that it is closed under duality and taking minors. We show that the same holds for elementary split matroids. In addition, the class of elementary split matroids is closed for truncation, a property that split matroids do not satisfy in general. To see the latter, consider the matroid $M=(U_{1,2}\oplus U_{1,2}\oplus U_{1,2}\oplus U_{1,2})_3$, that is, the $3$-truncation of the direct sum of four rank-$1$ uniform matroids on 2 elements. Then it is not difficult to check that $M$ is connected and has a $U_{0,1}\oplus U_{1,2}\oplus U_{1,1}$-minor, therefore it is not a split matroid, see Theorem~\ref{thm:equiv} later.

\begin{thm} \label{thm:closed}
The class of elementary split matroids is closed under duality, taking minors, and truncation.
\end{thm}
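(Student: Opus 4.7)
My plan is to establish each of the three closure properties by exhibiting a concrete hypergraph representation of the resulting matroid starting from a (normalized) representation $(\cH = \{H_1, \ldots, H_q\}, r, r_1, \ldots, r_q)$ of the input elementary split matroid $M$ on a ground set $S$ of size $n$, in which both \ref{eq:h1} and \ref{eq:h2} hold. Truncation is the easiest: for the $k$-truncation with $k \le r$ I simply replace $r$ by $k$ and discard every $H_i$ with $r_i \ge k$, since the associated constraint is then implied by $|X| \le k$. Property \ref{eq:h1} is inherited because decreasing $r$ only loosens the inequality $|H_i \cap H_j| \le r_i + r_j - r$.

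For duality, I would set $H_i^{*} = S - H_i$, $r_i^{*} = |S - H_i| - r + r_i$, and $r^{*} = n - r$. Using the characterization $X \in \cI(M^{*})$ iff $r_M(S - X) = r$, the rank formula of Theorem~\ref{thm:hyp} reduces this to $|X| \le r^{*}$ together with $|(S - X) - H_i| + r_i \ge r$ for every $i$, and the identity $(S - X) - H_i = (S - H_i) - X$ rewrites the latter condition as $|X \cap H_i^{*}| \le r_i^{*}$. A short computation based on $|H_i^{*} \cap H_j^{*}| = n - |H_i \cup H_j|$ then shows that \ref{eq:h1} for the starred data is equivalent to \ref{eq:h1} for $M$, so that the new data genuinely represents $M^{*}$ as an elementary split matroid.

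For deletion of $e \in S$, I take $H_i' = H_i - e$ and keep the rank parameters unchanged. For $X \subseteq S - e$ we have $X \cap H_i = X \cap H_i'$, so the system with the new data describes exactly $\cI(M \backslash e)$; property \ref{eq:h1} is inherited from $|H_i' \cap H_j'| \le |H_i \cap H_j|$. Closure under contraction then follows from the identity $M / e = (M^{*} \backslash e)^{*}$ combined with the previous two steps, and the full minor-closedness follows by induction on the number of deletions and contractions.

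The main point requiring care is the duality step: one has to carefully unpack the rank formula to confirm that the complemented hyperedges $S - H_i$ with the prescribed values really encode $\cI(M^{*})$, and verify that \ref{eq:h1} transforms cleanly under complementation. The remaining steps are short manipulations of the independence conditions.
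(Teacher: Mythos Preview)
Your proposal is correct and follows essentially the same approach as the paper: the same complemented-hyperedge construction for duality, the same restriction of hyperedges for deletion, the same use of $M/e=(M^*\backslash e)^*$ for contraction, and the same observation that lowering $r$ preserves \ref{eq:h1} for truncation. The only cosmetic difference is that you verify the dual representation at the level of independent sets via the rank formula, whereas the paper checks it at the level of bases; both computations are equivalent and equally short.
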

\begin{proof}
Let $M=(S,\cI)$ be a rank-$r$ elementary split matroid and $\cH=\{H_1,\dots,H_q\}$, $r,r_1,\dots,r_q$ be a representation satisfying \ref{eq:h1} and \ref{eq:h2}. For a non-negative integer $k<r$, the $k$-truncation of $M$ is the matroid $(M)_k=(S,\cI_k)$ where $\cI_k=\{X\subseteq S\mid |X|\le k,\ |X\cap H_i|\leq r_i\ \text{for $1\leq i \leq q$}\}$. As $|H_i\cap H_j|\leq r_i+r_j-r\leq r_i+r_j-k$, $(M)_k$ is an elementary split matroid.

Now consider a set $Z\subseteq S$. The deletion of $Z$ results in a matroid $M\backslash Z=(S-Z,\cI_{S-Z})$ where $\cI_{S-Z}=\{X\subseteq S-Z\mid |X|\le r,\ |X\cap (H_i-Z)|\leq r_i\ \text{for $1\leq i \leq q$}\}$. As $|(H_i\cap H_j)- Z|\leq |H_i\cap H_j|\leq r_i+r_j-r$, $M\backslash Z$ is an elementary split matroid. Note that \ref{eq:h2} might not hold for the restriction as the size of the ground set decreased, hence the rank of $M\backslash Z$ might be smaller than $r$.

Finally, define $\overline{H}_i:=S-H_i$, $\overline{r}:=|S|-r$, and $\overline{r}_i:=|\overline{H}_i|-r+r_i$ for $i=1,\dots,q$. Then $\overline{r}\leq |S|$ and $\overline{r},\overline{r}_1,\dots,\overline{r}_q$ are non-negative by $r\leq |S|$ and \ref{eq:h2}. By \ref{eq:h1}, for $1\leq i<j\leq q$, we obtain
\begin{align*}
    |\overline H_i\cap \overline H_j|
    {}&{}=|S|-|H_i|-|H_j|+|H_i\cap H_j|\\
    {}&{}\leq |S|-|H_i|-|H_j|+r_i+r_j-r\\
    {}&{}= \left(|\overline H_i|-r+r_i\right)+\left(|\overline H_j|-r+r_j\right)-\left(|S|-r\right)\\
    {}&{}=\overline r_i+\overline r_j-\overline r.
\end{align*}
By $r_i\geq 0$, for $i=1,\dots,q$, we obtain
\begin{equation*}
    |S-\overline H_i|+\overline r_i
    =|H_i|+|S|-|H_i|-r+r_i\geq \overline{r}.
\end{equation*}
Therefore $\overline\cH=\{\overline H_1,\dots,\overline H_q\}$, $\overline r,\overline r_1,\dots,\overline r_q$ satisfies all the conditions of Theorem~\ref{thm:hyp}, hence $\{X\subseteq S\mid |X|\leq \overline r,\ |X\cap \overline H_i|\leq \overline r_i\ \text{for $i=1,\dots,q$}\}$ forms the independent sets of a rank-$\overline r$ elementary split matroid $\overline M$. For a set $X\subseteq S$ of size $r$, $|X\cap H_i|\leq r_i$ holds if and only if $|\overline X\cap \overline H_i|\leq \overline r_i$ holds, where $\overline X=S-X$. That is, the bases of $M$ are exactly the complements of the bases of $\overline M$, thus $\overline M$ coincides with the dual $M^*$ of $M$. 

As every minor of a matroid can be obtained by a series of deletions and contractions, and $M/Z=(M^*\backslash Z)^*$, the minor-closedness of the class of elementary split matroids follows.
\end{proof}

\begin{rem}
Assume that the representation of $M$ is non-redundant, that is, all of \ref{eq:h1}--\ref{eq:h4} are satisfied. By \ref{eq:h4}, for $i=1,\dots,q$, we obtain
\begin{equation*}
    \overline r_i
    =|\overline H_i|-r+r_i
    =|S|-|H_i|-r+r_i
    \leq \overline r-1.
\end{equation*}
Furthermore, by \ref{eq:h3}, for $i=1,\dots,q$, we obtain
\begin{equation*}
    |\overline H_i|\geq |\overline H_i|-r+r_i+1=\overline r_i+1.
\end{equation*}
That is, $\overline\cH=\{\overline H_1,\dots,\overline H_q\}$, $\overline r,\overline r_1,\dots,\overline r_q$ satisfies \ref{eq:h3} and \ref{eq:h4} as well, hence then the representation of the dual provided by the proof of Theorem~\ref{thm:closed} is also non-redundant.
\end{rem}

The following observation will be helpful when characterizing binary split matroids.

\begin{thm} \label{thm:uniform}
Consider a non-redundant representation $\cH=\{H_1,\dots, H_q\}, r,r_1, \dots, r_q$ of an elementary split matroid $M$ on ground set $S$. Then $M|H_i \cong U_{r_i, |H_i|}$ and $M/H_i \cong U_{r-r_i, |S-H_i|}$ for $i=1,\dots, q$.
\end{thm}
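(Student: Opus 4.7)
The plan is to prove the restriction statement first by direct analysis of the independence constraints, then to deduce the contraction statement by duality, using the dual representation produced in the proof of Theorem~\ref{thm:closed}.

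For $M|H_i \cong U_{r_i, |H_i|}$, I would examine which of the constraints defining $\cI$ can possibly cut out a subset of $H_i$. The constraint indexed by $i$ reads $|X \cap H_i| \le r_i$, which for $X \subseteq H_i$ becomes $|X| \le r_i$; since $r_i \le r-1$ by \ref{eq:h3}, this already subsumes $|X| \le r$. For any other index $j$, the inclusion $X \cap H_j \subseteq H_i \cap H_j$ combined with \ref{eq:h1} and \ref{eq:h3} gives $|X \cap H_j| \le |H_i \cap H_j| \le r_i + r_j - r \le r_j - 1$, so the $j$-th constraint is automatically slack on every subset of $H_i$, independent of $|X|$. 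Therefore the independent sets of $M$ contained in $H_i$ are exactly the subsets of $H_i$ of size at most $r_i$, i.e.\ $M|H_i$ is $U_{r_i, |H_i|}$.

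For $M/H_i \cong U_{r-r_i, |S-H_i|}$, I would use the identity $M/H_i = (M^* \backslash H_i)^* = (M^*|\overline H_i)^*$ with $\overline H_i = S-H_i$. By Theorem~\ref{thm:closed} and the Remark that follows it, $M^*$ is itself an elementary split matroid admitting a non-redundant representation with hyperedges $\overline H_1, \dots, \overline H_q$, total rank $\overline r = |S|-r$, and local bounds $\overline r_j = |\overline H_j| - r + r_j$. Applying the restriction result just proved to $M^*$ yields $M^*|\overline H_i \cong U_{\overline r_i, |\overline H_i|}$. Taking duals and using $|\overline H_i| - \overline r_i = r - r_i$ gives the claim.

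The step I expect to be most delicate — though it is really just bookkeeping — is confirming that the dual representation of $M^*$ is still non-redundant, so that the restriction lemma may be applied to it; this is precisely the content of the Remark after Theorem~\ref{thm:closed}. The conceptual content of the proof lives entirely in the restriction case, and the two assertions of the theorem are then strictly dual.
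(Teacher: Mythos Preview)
Your proposal is correct and matches the paper's own proof almost line for line: the restriction case is handled by the same chain $|X\cap H_j|\le |H_i\cap H_j|\le r_i+r_j-r<r_j$ via \ref{eq:h1} and \ref{eq:h3}, and the contraction case is deduced by duality using the non-redundant representation of $M^*$ from Theorem~\ref{thm:closed} and its subsequent Remark, together with $M/H_i=(M^*|\overline H_i)^*$.
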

\begin{proof}
Let $X \subseteq H_i$ be a subset of size $r_i$ for some $1\leq i \leq q$. Then $|X|=r_i < r$ by \ref{eq:h3} and $|X \cap H_j| \le |H_i \cap H_j| \le r_i+r_j-r < r_j$ by \ref{eq:h1} and \ref{eq:h3} for each index $j \ne i$, hence $X$ is independent in $M$.  As each independent subset of $H_i$ has size at most $r_i$, we get that $M|H_i \cong U_{r_i, |H_i|}$. Considering the hypergraph representation of the dual matroid $M^*$ constructed in the proof of Theorem~\ref{thm:closed}, it follows that $M^*|\overline{H}_i \cong U_{\overline{r}_i, |\overline{H}_i|} = U_{|\overline{H}_i|-r+r_i, |\overline{H}_i|}$, hence $M / H_i = (M^*|\overline{H}_i)^* \cong U_{r-r_i, |\overline{H}_i|}$. 
\end{proof}

%%%%%%%%%%%%%%%%%%%%%%%%
\section{Excluded-minor characterization} 
\label{sec:minor}
%%%%%%%%%%%%%%%%%%%%%%%%

The aim of this section is to give an excluded-minor characterization of elementary split matroids. In contrast to split matroids where five forbidden minors are needed, elementary split matroids can be characterized by a single one. The next theorem determines the unique forbidden minor, and establishes a connection between elementary and connected split matroids.

\begin{thm} \label{thm:equiv}
The following are equivalent for a matroid $M$ on ground set $S$.
\begin{enumerate}[label={(\roman*)}]\itemsep0em
    \item $M$ is an elementary split matroid. \label{it:eq1}
    \item $M$ has no $U_{0,1} \oplus U_{1,2} \oplus U_{1,1}$-minor. \label{it:eq2}
    \item $M$ is a loopless and coloopless matroid whose proper cyclic flats form a clutter, or $M$ is the direct sum of a uniform matroid with either a rank-0 matroid or a free matroid. \label{it:eq3}
    \item $M$ is a connected split matroid or the direct sum of two uniform matroids. \label{it:eq4}
\end{enumerate}
\end{thm}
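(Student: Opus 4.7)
My plan is to run the cycle $(\text{iii})\Rightarrow(\text{i})\Rightarrow(\text{ii})\Rightarrow(\text{iii})$ and then close with $(\text{iii})\Leftrightarrow(\text{iv})$ through a case analysis on the connectedness of $M$.

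For $(\text{iii})\Rightarrow(\text{i})$, in the principal branch I would hand the proper cyclic flats $Z_1,\dots,Z_q$, together with $\emptyset$ and $S$, to Proposition~\ref{prop:cyclic}. The clutter condition forces $Z_i\wedge Z_j=\emptyset$ and $Z_i\vee Z_j=S$ whenever $i\ne j$, so \ref{eq:Z3} collapses to $r(Z_i)+r(Z_j)\ge r+|Z_i\cap Z_j|$, which is \ref{eq:h1}, while \ref{eq:Z2} applied to $(Z_i,S)$ yields \ref{eq:h2}. The exceptional direct-sum cases of (iii) admit an obvious single-hyperedge representation. For $(\text{i})\Rightarrow(\text{ii})$, minor-closedness (Theorem~\ref{thm:closed}) reduces the claim to showing that $N=U_{0,1}\oplus U_{1,2}\oplus U_{1,1}$ is not elementary split: its loop $a$ forces some hyperedge $H_i=\{a\}$ with $r_i=0$, its parallel pair $\{b,c\}$ forces some hyperedge $H_j\supseteq\{b,c\}$ with $r_j=1$, and then $|H_i\cap H_j|=0\not\le r_i+r_j-r=-1$, violating \ref{eq:h1}.

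The heart of the proof is $(\text{ii})\Rightarrow(\text{iii})$, which I would split by the loop/coloop structure of $M$. If $M$ has both a loop $a$ and a coloop $d$, then $M|(S-\{a,d\})$ cannot contain a $U_{1,2}$-minor and is therefore a direct sum of a rank-$0$ and a free matroid, placing $M$ in the exceptional branch of (iii). If $M$ has only a loop (only a coloop being dual), write $M=M'\oplus U_{0,\ell}$; a $U_{1,2}\oplus U_{1,1}$-minor of $M'$ would combine with a loop to give the forbidden minor, so $M'$ must be uniform. Indeed, if $M'$ were connected and non-uniform, Proposition~\ref{prop:w2} would yield an $M(\mathcal{W}_2)$-minor whose restriction to a parallel pair together with one further element is already $U_{1,2}\oplus U_{1,1}$, while if $M'$ were disconnected one could draw $U_{1,2}$ from a component containing a circuit and $U_{1,1}$ from any other component. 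The main case is when $M$ is loopless and coloopless: assuming a chain $Z_1\subsetneq Z_2$ of proper cyclic flats, I would take a basis $I$ of $M|Z_1$, extend it inside $Z_2-Z_1$ to a basis $I\cup K$ of $M|Z_2$, pick $b\in K$, set $J=K-b$, and contract $I\cup J$. Then any $a\in Z_1-I$ becomes a loop (as $\clo_M(I)=Z_1$), any $c\in(Z_2-Z_1)-(J\cup\{b\})$ — which is nonempty by the strict inequality in \ref{eq:Z2} — becomes parallel to $b$ (since $\clo_M(I\cup K)=Z_2$ by flatness of $Z_2$), and any $d\in S-Z_2$ is a coloop in the restriction to $\{a,b,c,d\}$ (again because $\clo_M(I\cup J\cup\{b\})=Z_2$), yielding the forbidden minor.

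Finally, I would establish $(\text{iii})\Leftrightarrow(\text{iv})$ by distinguishing connected and disconnected $M$. A direct sum of two uniform matroids is trivially in (iii), and a connected split matroid is elementary split with representation given by its split flacets: Proposition~\ref{prop:flacet}(c) supplies \ref{eq:h1} and the Joswig--Schröter characterization supplies the uniformity of $M|F$ and $M/F$. Conversely, a disconnected $M$ in the loopless-coloopless branch of (iii) must have exactly two components and both uniform (three components would produce $S_1\subsetneq S_1\cup S_2$, and any component with a proper cyclic flat $F_2$ would produce $S_1\subsetneq S_1\cup F_2$, both violating the clutter condition); a connected such $M$ has $M|Z_i$ and $M/Z_i$ connected uniform by Theorem~\ref{thm:uniform}, so each $Z_i$ is a split flacet, and with \ref{eq:h1} matching Proposition~\ref{prop:flacet}(c), the Joswig--Schröter theorem certifies that $M$ is split. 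The hardest step will be the forbidden-minor construction in the loopless-coloopless case of $(\text{ii})\Rightarrow(\text{iii})$: one must verify that after the prescribed contraction, the restriction to $\{a,b,c,d\}$ is \emph{exactly} $U_{0,1}\oplus U_{1,2}\oplus U_{1,1}$, which relies on a careful interplay between the flatness of $Z_2$, the cyclicity of $Z_1$, and axiom \ref{eq:Z2}.
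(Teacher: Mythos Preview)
Your cycle $(\text{iii})\Rightarrow(\text{i})\Rightarrow(\text{ii})\Rightarrow(\text{iii})$ followed by $(\text{iii})\Leftrightarrow(\text{iv})$ is a legitimate route, and your $(\text{iii})\Rightarrow(\text{i})$ via Proposition~\ref{prop:cyclic} is in fact cleaner than the paper's detour through $(\text{iv})$ and the facet description of the base polytope. There are, however, two genuine gaps.

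First, in the loopless/coloopless branch of $(\text{ii})\Rightarrow(\text{iii})$, your assertion that \emph{any} $c\in(Z_2-Z_1)-K$ becomes parallel to $b$ after contracting $I\cup J$ is false: such a $c$ may lie in $\clo_M(I\cup J)$ and hence become a loop. A concrete instance is the matroid whose cyclic flats form the chain $\emptyset\subset Z_1\subset F\subset Z_2\subset S$ with $|Z_1|=2$, $|F|=4$, $|Z_2|=6$, $|S|=8$ and ranks $0,1,2,3,4$; if $J$ is chosen so that $I\cup J$ spans $F$, then the unique element of $(F-Z_1)-J$ lies in $(Z_2-Z_1)-K$ yet becomes a loop. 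The fix is to pick $c\in Z_2-\clo_M(I\cup J)$ with $c\ne b$, which is possible because $Z_2-\clo_M(I\cup J)=\{b\}$ would make $b$ a coloop of $M|Z_2$, contradicting cyclicity of $Z_2$. The paper avoids this bookkeeping entirely: for proper cyclic flats $X\subsetneq Y$ it observes that $(M|Y)/X$ is loopless (since $X$ is a flat) and not free (since $Y$ is cyclic), hence contains a $U_{1,2}$-minor, and then adjoins a loop from $X$ and a coloop from $S-Y$.

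Second, in $(\text{iv})\Rightarrow(\text{i})$ for a connected split matroid, verifying \ref{eq:h1} for the split flacets via Proposition~\ref{prop:flacet}\ref{it:3} yields an elementary split matroid $\hat M$, but you still owe the identification $\hat M=M$; invoking the uniformity of $M|F$ and $M/F$ does not establish this. The paper closes the gap by appealing to the flacet description of the base polytope: if $B$ is not a basis of $M$ then $|B\cap F|>r_M(F)$ for some flacet $F$, and since $M$ is connected one has $|F|\ge 2$, so $F$ is a split flacet and $B$ is not a basis of $\hat M$ either.
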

\begin{proof}

\ref{it:eq1} $\Rightarrow$ \ref{it:eq2}  The class of elementary split matroids is closed under taking minors by Theorem~\ref{thm:closed}, hence it suffices to show that $M=U_{0,1}\oplus U_{1,2}\oplus U_{1,1}$ is not an elementary split matroid. Suppose to the contrary that there exists a hypergraph $\cH=\{H_1, \dots, H_q\}$ and values $r_1, \dots, r_q$ satisfying \ref{eq:h1}--\ref{eq:h4} with $r=2$ which define $M$. As $M$ has exactly one loop, there is an index $i$ such that $|H_i|=1$ and $r_i=0$. We claim that $H_i$ is the unique hyperedge in $\cH$. Indeed, for an arbitrary index $j \ne i$, we have $0\leq |H_i \cap H_j|\le r_j-r < 0$ which is not possible. Hence $i=q=1$ and $M \cong U_{0,1}\oplus U_{2,3}$, a contradiction.
\medskip

\noindent \ref{it:eq2} $\Rightarrow$ \ref{it:eq3} Suppose that $M$ is $U_{0,1}\oplus U_{1,2}\oplus U_{1,1}$-minor-free and has proper cyclic flats $X$ and $Y$ such that $X \subsetneq Y$. As $X$ is a flat and $Y$ is cyclic, $(M|Y)/X$ is loopless and not free, hence it has a $U_{1,2}$-minor. Let $x \in X$ and $z \in S-Y$ and consider the matroid $M' = (M|(Y+z))/(X-x)$. As $X$ is cyclic, $x$ is a loop in $M/(X-x)$, and it is also a loop in $M'$. As $Y$ is a flat, $z$ is a coloop in $M|(Y+z)$, hence it is a coloop in $M'$ as well.  We get that $M' \cong U_{0,1} \oplus (M|Y)/X \oplus U_{1,1}$ where $(M|Y)/X$ has a $U_{1,2}$-minor. Therefore $M'$ has a $U_{0,1} \oplus U_{1,2} \oplus U_{1,1}$-minor, hence so does $M$. This contradiction proves that the proper cyclic flats of $M$ form a clutter.

It remains to consider the case when $M$ is $U_{0,1}\oplus U_{1,2} \oplus U_{1,1}$-minor-free and has a loop or a coloop. By duality, we may assume that $M$ contains a loop. Let $M'$ be the matroid obtained by deleting all the loops and coloops from $M$. If $M'$ is empty, then $M$ is the direct sum of a rank-0 and a (possibly empty) free matroid. Otherwise, as $M'$ is loopless and coloopless, it has a $U_{1,2}$-minor. This implies that $M$ is coloopless, as otherwise it contains a $U_{0,1}\oplus U_{1,2}\oplus U_{1,1}$-minor, contradicting the assumption. We also get that $M'$ is connected, as otherwise it has a $U_{1,2}\oplus U_{1,2}$-minor, meaning that $M$ contains $U_{0,1}\oplus U_{1,2}\oplus U_{1,2}$ as a minor. By Proposition~\ref{prop:w2}, each connected non-uniform matroid contains $M(\mathcal{W}_2)$ as a minor. However, $M(\mathcal{W}_2)$ has a $U_{1,2}\oplus U_{1,1}$-minor, hence $M'$ is necessarily a uniform matroid. Therefore $M$ is the direct sum of a rank-0 and a uniform matroid.
\medskip

\ref{it:eq3} $\Rightarrow$ \ref{it:eq4} The implication is immediate if $M$ is the direct sum of a uniform matroid with either a rank-$0$ matroid or a free matroid. Hence we may assume that $M$ is loopless, coloopless and its cyclic flats form a clutter. If $M$ is connected, we need to show that any two distinct split flacets $F$ and $G$ are compatible, that is, $|F\cap G| +r_M(S) \le r_M(F)+r_M(G)$ by Proposition~\ref{prop:flacet}. As proper cyclic flats form a clutter and each split flacet is a proper cyclic flat, this inequality is equivalent to the cyclic flat axiom \ref{eq:Z3} for $X=F$ and $Y=G$.

Consider the case when $M$ is disconnected and let $Z_1,\dots, Z_t$ denote its connected components. As $M$ is loopless and coloopless, $Z_1, \dots, Z_t$ are proper cyclic flats. This implies $t=2$, as otherwise $Z_1$ and $Z_1 \cup Z_2$ are both proper cyclic flats with $Z_1 \subsetneq Z_1 \cup Z_2$, contradicting the assumption that proper cyclic flats form a clutter. Assume that there is a proper cyclic flat $Z\not\in\{Z_1, Z_2\}$. Then in the lattice of cyclic flats $Z \wedge Z_i = \emptyset$ and $Z\vee Z_i = S$, hence \ref{eq:Z3} implies that $r_M(Z)+r_M(Z_i) \ge r_M(S) + |Z \cap Z_i|$ holds for $i=1,2$. Thus we get
\begin{align*} 
r_M(Z)+1 
{}&{}\le |Z| \\
{}&{} = |Z \cap Z_1| + |Z \cap Z_2| \\
{}&{} \le (r_M(Z)+r_M(Z_1) -r_M(S)) + (r_M(Z)+r_M(Z_2)-r_M(S)) \\ 
{}&{} = 2r_M(Z)-r_M(S) \\
{}&{}\le r_M(Z)-1, 
\end{align*}
a contradiction. Therefore, the only proper cyclic flats of $M$ are $Z_1$ and $Z_2$, hence $M|Z_1$ and $M|Z_2$ are uniform matroids. We proved that $M$ is the direct sum of two uniform matroids.
\medskip

\noindent \ref{it:eq4} $\Rightarrow$ \ref{it:eq1} Assume first that $M$ is the direct sum of a rank-$r_1$ uniform matroid on ground set $H_1$ and a rank-$r_2$ uniform matroid on ground set $H_2$. Let $r:=r_1+r_2$. Then $M$ is the elementary split matroid on ground set $S:=H_1\cup H_2$ corresponding to the hypergraph $\cH=\{H_1,H_2\}$ and non-negative integers $r,r_1,r_2$. Indeed, $r\leq |S|$ holds and \ref{eq:h1} is satisfied as $0=|H_1\cap H_2|\leq r_1+r_2-r=0$.

Now consider the case when $M$ is a connected split matroid. Let $r$ denote the rank of $M$, $\cH=\{H_1,\dots,H_q\}$ be the collection of split flacets, and set the value of $r_i$ to be the rank of $H_i$ for $i=1,\dots,q$. Then $r\leq |S|$ and the values $r,r_1,\dots,r_q$ are non-negative. As $M$ is a split matroid, any two split flacets are compatible, therefore \ref{eq:h1} is satisfied. By Theorem~\ref{thm:hyp}, $\cH=\{H_1,\dots,H_q\}$, $r,r_1,\dots,r_q$ define an elementary split matroid $\hat M$. We claim that $\hat M$ is identical to $M$. Indeed, if a set $B\subseteq S$ is a basis in $M$ then, by definition, it is also a basis in $\hat M$. If $B$ is not a basis in $M$, then the charasteristic vector of $B$ is not contained in the base polytope of $M$. As the base polytope is completely determined by the flacet inequalities (see e.g. \cite[Theorem 2.6]{oh2021facets}), this means that $|B\cap F|>r_M(F)$ for some flacet $F$ of $M$. Since $M$ is connected, necessarily $|F|\geq 2$. By Proposition~\ref{prop:flacet}\ref{it:2}, $F$ is a split flacet, hence $B$ is not a basis of $\hat M$ either.
\end{proof}

%%%%%%%%%%%%%%%%%%%%%%%%
\section{Applications}
\label{sec:app}
%%%%%%%%%%%%%%%%%%%%%%%%

As an application of our results, we give a new proof for the result of Cameron and Mayhew~\cite{cameron2021excluded}. Furthermore, we further give a complete list of binary split matroids.

%%%%%%%%%%%%%%%%%%%%%%%%
\subsection{Split matroids}
%%%%%%%%%%%%%%%%%%%%%%%%

Based on the previous results, we give a different and shorter proof of the excluded minor characterization of split matroids 
%alternative to the proof of 
originally proved by Cameron and Mayhew~\cite{cameron2021excluded}. As already observed in \cite{joswig2017matroids}, the only disconnected excluded minor for the class of split matroids is $M(\mathcal{W}_2) \oplus M(\mathcal{W}_2)$. This follows from Proposition~\ref{prop:w2} and the fact that a disconnected matroid is a split matroid if and only if it is the direct sum of a connected split matroid and uniform matroids (see \cite[Proposition~2.7]{cameron2021excluded}). Joswig and Schröter \cite{joswig2017matroids} also identified four connected rank-3 excluded minors on 6 elements, these matroids $S_1, S_2, S_3, S_4$ are given by their geometric representations on Figure~\ref{fig:S}, see also \cite{cameron2021excluded}. %Joswig and Schröter \cite{joswig2017matroids} also identified four connected excluded minors and claimed that these are the only rank-3 excluded minors on 6 elements. By relying on Theorem~\ref{thm:equiv}, we show that their list of excluded minors is complete.

\begin{thm} 
The only connected excluded minors for split matroids are $S_1, S_2, S_3$ and $S_4$.
%If $M$ is a connected excluded minor for the class of split matroids, then $M/x\backslash y \cong U_{0,1} \oplus U_{1,2}\oplus U_{1,1}$ holds for some elements $x$ and $y$. 
\end{thm}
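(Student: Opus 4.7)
The plan is to establish both directions: first verify $S_1,\dots,S_4$ are connected excluded minors, then show no other connected excluded minors exist by leveraging Theorem~\ref{thm:equiv} together with Proposition~\ref{prop:connected}.

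For the forward direction, I would use Theorem~\ref{thm:equiv}\ref{it:eq2} as the certificate: for each $S_i$ one inspects its geometric representation to exhibit a $U_{0,1}\oplus U_{1,2}\oplus U_{1,1}$-minor, which together with connectedness shows $S_i$ is not a split matroid. Then I would check by direct inspection that every single-element deletion and contraction of $S_i$ is a split matroid; this is a finite verification since each $S_i$ has rank $3$ on six elements, and for each of the resulting $12$ rank-$2$ or rank-$3$ matroids on five elements one can easily read off a compatible split-flacet system (or equivalently, exhibit a hypergraph representation as an elementary split matroid, invoking Theorem~\ref{thm:equiv}).

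For the converse, let $M$ be a connected excluded minor for split matroids. Since $M$ is connected but not split, Theorem~\ref{thm:equiv} forces $N_0 := U_{0,1}\oplus U_{1,2}\oplus U_{1,1}$ to be a minor of $M$, so $|E(M)|\ge 4$. By minor-minimality, every proper minor of $M$ is split, and by Proposition~\ref{prop:connected} at least one of $M/e, M\backslash e$ is connected, hence a connected split matroid, which by Theorem~\ref{thm:equiv} is the same as an elementary split matroid with a non-redundant hypergraph representation. The key bookkeeping step is to take a minor $N \cong N_0$ obtained as $M/B\backslash A$ with $|A|+|B|$ minimal, and show $|A|+|B|\le 2$. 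The idea is that because $M$ has no loops and no coloops (being connected with $|E(M)|\ge 4$), the loop $\ell$ and coloop $c$ of $N_0$ each require at least one element of $A\cup B$ to be ``activated''; for $\ell$, some element of $B$ must lie on a circuit with $\ell$ in $M$, and dually for $c$. Using minimality of $M$, any element of $A\cup B$ not involved in such a structural role could be removed while preserving the $N_0$-minor inside a proper minor of $M$, which would be split — giving a contradiction unless $A\cup B$ consists of exactly these two necessary elements. This pins down $|E(M)|=6$ and $r(M)\in\{3, |E(M)|-3\}=\{3\}$ (up to duality, since the class is self-dual by Theorem~\ref{thm:closed} and Joswig--Schröter).

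The main obstacle is the final enumeration step: among connected rank-$3$ matroids on six elements, isolate those with a $N_0$-minor all of whose one-element minors are elementary split. I would organize this by the structure of the cyclic flats — specifically the rank-$2$ cyclic flats (``lines'' of size $\ge 3$) — using Theorem~\ref{thm:uniform} to constrain the lines of $M/e$ and $M\backslash e$ to be of uniform type, and then reconstructing $M$ from these local pictures. Since the failure of being an elementary split matroid is witnessed by a pair of rank-$2$ cyclic flats that either are nested or violate the compatibility inequality of Proposition~\ref{prop:flacet}\ref{it:3}, the reconstruction reduces to a small case analysis on how two such non-compatible cyclic flats can sit in $M$; the four outcomes correspond exactly to the geometric configurations of $S_1, S_2, S_3, S_4$.
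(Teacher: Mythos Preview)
Your overall architecture matches the paper's: reduce a connected excluded minor $M$ to a rank-$3$ matroid on six elements via the $N_0 = U_{0,1}\oplus U_{1,2}\oplus U_{1,1}$ minor, then enumerate. However, the ``bookkeeping step'' has a genuine gap.

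You write that any element of $A\cup B$ not in a ``structural role'' could be removed while preserving the $N_0$-minor inside a proper minor of $M$, ``which would be split --- giving a contradiction''. But this is not a contradiction: split matroids \emph{can} have $N_0$-minors, since $N_0$ itself is split (it is a direct sum of uniform matroids). Only \emph{elementary} split matroids are $N_0$-minor-free. For your argument to work you would need the proper minor in question to be connected, but nothing guarantees this: if $|B|\ge 2$ and $b\in B$, then $M/b$ still has $N_0$ as a minor and is split, but it may well be disconnected --- and in fact it must be, precisely because it is split but not elementary split. Moreover, the claim that only one element of $B$ is needed to ``activate'' the loop $\ell$ is unjustified: the circuit of $M$ through $\ell$ contained in $B+\ell$ could involve several elements of $B$.

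The paper closes this gap differently. It first observes, via Proposition~\ref{prop:connected} and Proposition~\ref{prop:nearly}, that $M$ is nearly-elementary-split. Then for any nonempty $X'\subseteq X$ (where $X,Y$ play the roles of your $B,A$) it shows $M/X'$ has \emph{exactly one} loop: zero or $\ge 2$ loops would force a minor such as $M(\mathcal{W}_2)\oplus U_{1,2}$ or $U_{0,2}\oplus U_{1,2}\oplus U_{1,2}$, none of which is nearly-elementary-split. Applying this with $X'=\{x_1\}$, $\{x_2\}$, $\{x_1,x_2\}$ and tracking the resulting parallel classes yields $|X|=1$; duality gives $|Y|=1$. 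This parallel-class counting argument is the missing idea in your sketch. Your proposed final enumeration via cyclic flats is plausible but vaguer than the paper's direct labeling of the six elements; the latter dispatches the case analysis in a few lines.
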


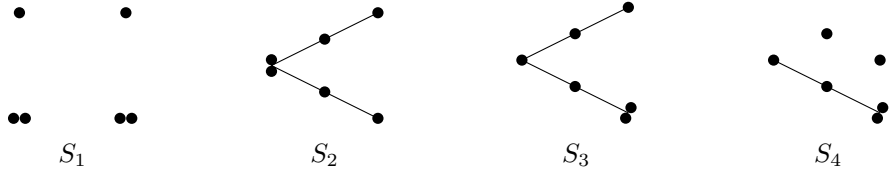
\begin{figure}[t!]
\centering
\begin{subfigure}[t]{0.2\textwidth}
    \centering
    \begin{tikzpicture}[scale=0.7]
        \fill (-0.11,0) circle (3pt)
            (0.11,0) circle (3pt)
            (1.89,0) circle (3pt)
            (2.11,0) circle (3pt)
            (0, 2) circle (3pt)
            (2,2) circle (3pt);
    \end{tikzpicture}
    \caption*{$S_1$}
\end{subfigure} %\hfill
\begin{subfigure}[t]{0.2\textwidth}
    \centering
    \begin{tikzpicture}[scale=0.7]
        \fill (0,1.11) circle (3pt)
            (0,0.89) circle (3pt)
            (1,0.5) circle (3pt)
            (1,1.5) circle (3pt)
            (2, 0) circle (3pt)
            (2,2) circle (3pt);
        \draw (0,1) -- (2,0)
            (0,1) -- (2,2);
    \end{tikzpicture}
    \caption*{$S_2$}
\end{subfigure} %\hfill
\begin{subfigure}[t]{0.2\textwidth}
    \centering
    \begin{tikzpicture}[scale=0.7]
        \fill (0,1) circle (3pt)
            (1,0.5) circle (3pt)
            (1,1.5) circle (3pt)
            (1.95, -0.1) circle (3pt)
            (2.05, 0.1) circle (3pt)
            (2,2) circle (3pt);
        \draw (0,1) -- (2,0)
            (0,1) -- (2,2);
    \end{tikzpicture}
    \caption*{$S_3$}
\end{subfigure} %\hfill
\begin{subfigure}[t]{0.2\textwidth}
    \centering
    \begin{tikzpicture}[scale=0.7]
        \fill (0,1) circle (3pt)
            (1,0.5) circle (3pt)
            (1,1.5) circle (3pt)
            (1.95, -0.1) circle (3pt)
            (2.05, 0.1) circle (3pt)
            (2,1) circle (3pt);
        \draw (0,1) -- (2,0);
    \end{tikzpicture}
    \caption*{$S_4$}
\end{subfigure} \hfill
\caption{The connected excluded minors for split matroids \cite{cameron2021excluded}.}
\label{fig:S}
\end{figure}

\begin{proof}
Let $M$ be a connected matroid which is not a split matroid but each of its proper minors is. By Theorem~\ref{thm:equiv}, connected split matroids coincide with connected elementary split matroids, hence $M$ is not elementary split while it is nearly-elementary-split by Proposition~\ref{prop:connected}. As $M$ is not elementary split, $(M/X)\backslash Y \cong U_{0,1}\oplus U_{1,2}\oplus U_{1,1}$ for some subsets $X, Y\subseteq S$ by Theorem~\ref{thm:equiv}. By $M$ being loopless and coloopless, the sets $X$ and $Y$ are nonempty. Let $X'\subseteq X$ be a nonempty subset and consider the matroid $N=M/X'$. Notice that $N$ is disconnected since it is split but not elementary split. Moreover, $N$ is coloopless since $M$ is coloopless.

We claim that $N$ contains exactly one loop. If $N$ is loopless, then each of its connected components has a $U_{1,2}$-minor and each of its non-uniform components has an $M(\mathcal{W}_2)$-minor by Proposition~\ref{prop:w2}. As $N$ is not the direct sum of two uniform matroids by Theorem~\ref{thm:equiv}, we get that it either contains $M(\mathcal{W}_2)\oplus U_{1,2}$ or $U_{1,2}\oplus U_{1,2}\oplus U_{1,2}$ as a minor. This contradicts Proposition~\ref{prop:nearly} since neither of these two matroids is nearly-elementary-split. Similarly, if $N$ has at least two loops, then it has a non-uniform connected component or at least two loopless connected components. Thus $N$ contains either $U_{0,2}\oplus M(\mathcal{W}_2)$ or $U_{0,2}\oplus U_{1,2}\oplus U_{1,2}$ as a minor, both of which contradict Proposition~\ref{prop:nearly}. This proves that $N$ has exactly one loop.

Suppose that $|X|\ge 2$ and choose distinct elements $x_1, x_2 \in X$. By our previous observation, $M/x_1$ contains exactly one loop $l_1$ and $M/x_2$ contains exactly one loop $l_2$, that is, $\{x_1, l_1\}$ and $\{x_2, l_2\}$ are parallel classes of $M$. If $x_2=l_1$, then $M/\{x_1, x_2\}$ is loopless. Otherwise, parallel classes $\{x_1, l_1\}$ and $\{x_2, l_2\}$ are disjoint and $M/\{x_1, x_2\}$ contains $l_1$ and $l_2$ as loops. Both of these cases contradict that $M/\{x_1, x_2\}$ contains exactly one loop. We conclude that $|X|=1$. The class of split matroids is closed under duality, hence the dual $M^*$ is also an excluded minor. Applying the previous argument to $(M^*\backslash X) / Y =((M/X)\backslash Y)^*= U_{1,1}\oplus U_{1,2} \oplus U_{0,1}$, we get that $|Y|=1$ holds as well. 

We proved that $M$ is a rank-3 matroid on 6 elements. Denote the element of $X$ by $a$, the loop of $M/a$ by $b$, the element of $Y$ by $e$, the coloop of $M\backslash e$ by $f$, and the remaining two elements of the ground set by $c$ and $d$. Then $\{e,f\}$ is a cocircuit of $M$, thus $M|\{a,b,c,d\}$ is a loopless rank-2 matroid containing the parallel class $\{a,b\}$, hence it is isomorphic to either $U_{1,2}\oplus U_{1,2}$ or $M(\mathcal{W}_2)$. The former case gives the matroid $S_1$. In the latter case consider the lines $\{a,b,c,d\}$ and $\clo_M(\{e,f\})$. If $\clo_M(\{e,f\}) = \{e,f\}$, we get $S_4$. Otherwise, the intersection of $\{a,b,c,d\}$ and $\clo_M(\{e,f\})$ is a rank-1 flat, thus it is $\{a,b\}$, $\{c\}$ or $\{d\}$. The first case gives $S_2$ and the latter two cases give $S_3$.
\end{proof}

%%%%%%%%%%%%%%%%%%%%%%%%
\subsection{Binary split matroids}
\label{sec:binary}

Acketa~\cite{acketa1988binary} gave a complete list of binary paving matroids:  $U_{r,n}$ for $r\in \{0,1,n-1,n\}$, loopless rank-2 matroids with at most three parallel classes, $M(K_4-e)$, $M(K_4)$, $M(K_{2,3})$, $F_7$, $F_7^*$ and $AG(3,2)$ (see also \cite{oxley2011matroid} for the definition of the latter three matroids). Based on this and our previous results, we extend this list to contain all binary split matroids. As each split matroid has at most one non-uniform connected component, we only consider the connected case. Recall that the only forbidden minor for binary matroids is $U_{2,4}$ by Tutte~\cite{tutte1958homotopyII}.

\begin{thm} The following is a complete list of connected binary split matroids on at least two elements.
\begin{enumerate}[label={(\alph*)}] \itemsep0em
    \item Matroids obtained by adding (possibly zero) parallel copies to an element of $U_{r-1,r}$ for any $r\ge 2$. \label{eq:bin1}
    \item Loopless rank-2 matroids with exactly three parallel classes, and their duals. \label{eq:bin2}
    \item Connected binary (sparse) paving matroids of rank and corank at least three: $M(K_4)$, $F_7$, $F_7^*$, $AG(3,2)$. \label{eq:bin3}
\end{enumerate}
\end{thm}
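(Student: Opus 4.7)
The plan is to combine Theorem~\ref{thm:equiv} (connected split equals connected elementary split), Theorem~\ref{thm:uniform} (structure of $M|H_i$ and $M/H_i$ from a non-redundant hypergraph representation), the fact that binary matroids are exactly the $U_{2,4}$-minor-free ones, and Acketa's list of connected binary paving matroids.

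For the easy direction, I would verify that each listed family consists of connected binary split matroids. Matroids in (a) are cycle matroids of graphs obtained from $C_r$ by adding parallel edges, hence binary and connected, and they are elementary split with the non-trivial parallel class as a single hyperedge of rank $1$. Matroids in (b) have simplification $U_{2,3}$, so they are binary; their parallel classes of size $\geq 2$ form a valid hypergraph representation with $r_i = 1$, and three parallel classes give connectedness. The duals in (b) follow from closure of binarity and of the elementary split class under duality (Theorem~\ref{thm:closed}). For (c), each of the four matroids is well-known to be binary, connected, and paving, and paving matroids are automatically elementary split (cyclic hyperplanes with $r_i = r-1$ satisfy \ref{eq:h1} since any two distinct hyperplanes meet in at most $r-2$ elements).

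For the converse, let $M$ be a connected binary split matroid on $\geq 2$ elements. By Theorem~\ref{thm:equiv}, $M$ is elementary split; fix a non-redundant representation $\cH = \{H_1, \ldots, H_q\}$, $r, r_1, \ldots, r_q$. Since $M$ is connected on $\geq 2$ elements, it is loopless and coloopless, so $1 \leq r_i \leq r-1$. If $q = 0$, then $M$ is uniform, and avoiding $U_{2,4}$ leaves $U_{1,n}$ or $U_{n-1,n}$, both in (a). Otherwise, by Theorem~\ref{thm:uniform}, $M|H_i \cong U_{r_i,|H_i|}$ and $M/H_i \cong U_{r-r_i,|S-H_i|}$ must both be binary. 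If $r = 2$, then every $r_i = 1$ so each $H_i$ is a parallel class; connectedness forces at least three parallel classes and binarity allows at most three (otherwise a deletion produces a $U_{2,4}$ minor), placing $M$ in (b). The corank-$2$ case is symmetric: applying this argument to $M^*$ places $M$ in (b) as a dual.

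The heart of the argument is the case $r,\, n-r \geq 3$. Here the binarity of $M|H_i$ and $M/H_i$, combined with \ref{eq:h2}--\ref{eq:h4} and the fact that a free $M/H_i$ would produce coloops in $M$, forces each $H_i$ to be either a parallel class with $r_i = 1, |H_i| = n - r$ (type~I), or an $r$-element circuit-flat with $r_i = r-1, |H_i| = r$ (type~II). Inequality \ref{eq:h1} rules out two type-I hyperedges, and in a mixed type-I/type-II configuration Theorem~\ref{thm:hyp} gives $r_M(H_1) + r_M(S - H_1) = 1 + (r-1) = r$, contradicting connectedness. The single type-II case is also non-binary: deleting any $e \in H_1$ yields $U_{r, n-1}$, which contains $U_{2,4}$ as a minor when $r, n-r \geq 3$. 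The surviving binary cases are thus a single type-I hyperedge, yielding (a), or $q \geq 2$ type-II hyperedges making $M$ paving; Acketa's classification restricted to rank and corank $\geq 3$ leaves precisely $M(K_4)$, $F_7$, $F_7^*$, $AG(3,2)$, i.e., (c). The main obstacle will be this case analysis, particularly isolating the admissible hyperedge shapes from the interacting constraints of binarity, \ref{eq:h2}--\ref{eq:h4}, and connectedness, and then cleanly ruling out the mixed and single type-II configurations so that Acketa's theorem can finish the argument.
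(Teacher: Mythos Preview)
Your proposal is correct and follows essentially the same approach as the paper: reduce to a non-redundant hypergraph representation via Theorem~\ref{thm:equiv}, constrain the hyperedges using Theorem~\ref{thm:uniform} together with the $U_{2,4}$-exclusion, and invoke Acketa's list for the paving case. The paper organizes the rank/corank $\ge 3$ step by splitting first into paving versus non-paving rather than classifying each hyperedge as your type~I or type~II, so your separate ``single type-II'' case is absorbed there (it is paving and hence already excluded by Acketa), but the underlying computations coincide.
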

\begin{proof}
It is not difficult to check that all the listed matroids are connected binary split matroids. It remains to prove that each binary split matroid $M$ is included in the list. Connected matroids of rank or corank one are $U_{1,n}$ and $U_{n-1,n}$, and these are included in \ref{eq:bin1}. Connected binary matroids of rank or corank two are exactly the matroids listed as \ref{eq:bin2}. Thus we may assume that $M$ has rank and corank at least three. As \ref{eq:bin3} contains binary paving matroids of rank and corank at least three from the list of Acketa~\cite{acketa1988binary}, it only remains to consider the non-paving case.

Let $M$ be given by a non-redundant hypergraph representation $\cH = \{H_1, \dots, H_q\}$, $r, r_1, \allowbreak\dots, r_q$. As $M$ is non-paving, $r_i \le r-2$ holds for some index $i$. By Theorem~\ref{thm:uniform}, $M/H_i \cong U_{r-r_i, |S-H_i|}$ where $2\le r-r_i$ by our assumption and $r-r_i+1 \le |S-H_i|$ as $M$ is coloopless. As $M$ contains no $U_{2,4}$-minor, nececssarily $|S-H_i|=r-r_i+1$. Then $|H_i| = |S|-r+r_i-1 \ge r_i+2$ as $M$ has corank at least 3. Furthermore, $M|H_i \cong U_{r_i, |H_i|}$ contains no $U_{2,4}$-minor, implying $r_i=1$ and $|S-H_i| = r$. Suppose that $q\ge 2$ and pick an index $j \ne i$. Applying \ref{eq:h1} and \ref{eq:h3}, we get $1+r_j-r \ge |H_i \cap H_j| \ge |H_j| - |S-H_i| \ge r_j + 1 - r$, hence $S-H_i \subseteq H_j$ and $|H_j| = r_j+1$. Then  $S-H_i\subseteq H_j$ implies that $r=|S-H_i| \le |H_j| = r_j+1 \le r$, hence $S-H_j = H_j$ and $r_j = r-1$. Therefore, $S$ is the disjoint union of the rank-1 set $H_i$ and the rank-$(r-1)$ set $H_j$, contradicting the connectivity of $M$. This proves that $q=1$, so $M\cong (U_{1, |H_i|} \oplus U_{r, r})_r$ as described in \ref{eq:bin1}.
\end{proof}
%%%%%%%%%%%%%%%%%%%%%%%%

%%%%%%%%%%%%%%%%%%%%%%%%
\section*{Acknowledgement} The work was supported by the Lend\"ulet Programme of the Hungarian Academy of Sciences -- grant number LP2021-1/2021 and by the Hungarian National Research, Development and Innovation Office -- NKFIH, grant numbers FK128673 and TKP2020-NKA-06.
Yutaro Yamaguchi was supported by JSPS KAKENHI Grant Numbers JP20K19743 and JP20H00605 and by Overseas Research Program in Graduate School of Information Science and Technology, Osaka University.
Yu Yokoi was supported by JSPS KAKENHI Grant Number JP18K18004 and JST PRESTO Grant Number JPMJPR212B.
%%%%%%%%%%%%%%%%%%%%%%%%

\bibliographystyle{abbrv}
\bibliography{split}

\end{document}